\newtheorem{theorem}{Theorem}[section]
\newtheorem{corollary}{Corollary}[section]
\theoremstyle{definition}
\newtheorem{definition}{Definition}[section]
\newtheorem{remark}{Remark}[section]
\newtheorem{example}{Example}[section]
\numberwithin{equation}{section}
\begin{document}


\title
[IPADs of \(p\)-tower groups]
{Index-\(p\) abelianization data of \\
\(p\)-class tower groups}

\author{Daniel C. Mayer}
\address{Naglergasse 53\\8010 Graz\\Austria}
\email{algebraic.number.theory@algebra.at}
\urladdr{http://www.algebra.at}

\thanks{Research supported by the Austrian Science Fund (FWF): P 26008-N25}

\subjclass[2000]{Primary 11R29, 11R37, 11R11; secondary 20D15}
\keywords{\(p\)-class groups, \(p\)-principalization types, \(p\)-class field towers,
quadratic fields, second \(p\)-class groups, \(p\)-class tower groups, coclass graphs}

\date{February 11, 2015}

\begin{abstract}
Given a fixed prime number \(p\),
the multiplet of abelian type invariants
of the \(p\)-class groups of all
unramified cyclic degree \(p\) extensions
of a number field \(K\)
is called its IPAD
(index-\(p\) abelianization data).
These invariants have proved to be a valuable information
for determining the Galois group \(G_p^2\)
of the second Hilbert \(p\)-class field
and the \(p\)-capitulation type \(\varkappa\) of \(K\).
For \(p=3\) and a number field \(K\)
with elementary \(p\)-class group
of rank two, all possible IPADs
are given in the complete form
of several infinite sequences.
Iterated IPADs of second order are used
to identify the group \(G_p^\infty\) of the
maximal unramified pro-\(p\) extension of \(K\).
\end{abstract}

\maketitle



\section{Introduction}
\label{s:Intro}

After a thorough discussion of the terminology used in this article,
the logarithmic and power form of abelian type invariants in \S\
\ref{s:ATI},
and multilayered transfer target types (TTTs),
ordered and accumulated index-\(p\) abelianization data (IPADs)
up to the third order in \S\
\ref{s:IPAD},
we state the main results in \S\
\ref{ss:SporadicIPADs}
on IPADs of exceptional form,
and in \S\
\ref{ss:IPADSequences}
on IPADs in parametrized infinite sequences.
These main theorems give all possible IPADs of number fields \(K\)
with \(3\)-class group \(\mathrm{Cl}_3(K)\) of type \((3,3)\).

Before we turn to applications in extreme computing,
that is, squeezing the computational algebra systems PARI
\cite{PARI}
and MAGMA
\cite{BCP,BCFS,MAGMA}
to their limits in \S\
\ref{s:ExtremeComputing},
where we show how to detect malformed IPADs in \S\
\ref{ss:Sifting},
and how to complete partial \(p\)-capitulation types in \S\
\ref{ss:Completion},
we have to establish a componentwise correspondence
between transfer kernel types (TKTs) and IPADs
in \S\
\ref{s:Correspondence}
by exploiting details of proofs which were given in
\cite{Ma3}.

Iterated IPADs of second order are used in \S\
\ref{s:IPAD2ndOrd}
for the indirect calculation of TKTs in \S\
\ref{ss:Capitulation},
and for determining the exact length \(\ell_p(K)\) of  the \(p\)-class tower
of a number field \(K\) in \S\
\ref{ss:TowerLength}.
This sophisticated technique proves \(\ell_3(K)=3\)
for \(K=\mathbb{Q}(\sqrt{d})\) with
\(d\in\lbrace 342\,664, 957\,013\rbrace\) (the first real quadratic fields)
and \(d=-3\,896\) (the first tough complex quadratic field
after the \lq easy\rq\ \(d=-9\,748\)
\cite{BuMa}),
which resisted all attempts up to now.

Finally, we emphasize that infinite \(p\)-class towers
admit an unknown wealth of possible fine structure in \S\
\ref{s:CmpQdr3Rk3}
on complex quadratic fields \(K\) having a
\(3\)-class group \(\mathrm{Cl}_3(K)\) of type \((3,3,3)\).



\section{Abelian type invariants}
\label{s:ATI}

Let \(p\) be a prime number
and \(A\) be a finite abelian \(p\)-group.
According to the main theorem on finitely generated abelian groups,
there exists a non-negative integer \(r\ge 0\), the \textit{rank} of \(A\),
and a sequence \(n_1,\ldots,n_r\) of positive integers
such that \(n_1\le n_2\le\ldots\le n_r\) and

\begin{equation}
\label{eqn:FinAbGrp1}
A\simeq\mathbb{Z}/p^{n_1}\mathbb{Z}\oplus\ldots\oplus\mathbb{Z}/p^{n_r}\mathbb{Z}.
\end{equation}

\noindent
The powers \(d_i:=p^{n_i}\), \(1\le i\le r\), are known as the \textit{elementary divisors} of \(A\),
since \(d_i\mid d_{i+1}\) for each \(1\le i\le r-1\).
It is convenient to collect equal elementary divisors in formal powers with positive exponents
\(r_1,\ldots ,r_s\) such that \(r_1+\ldots +r_s=r\), \(0\le s\le r\), and
\[n_1=\ldots =n_{r_1}<n_{r_1+1}=\ldots =n_{r_1+r_2}<\ldots <n_{r_1+\ldots +r_{s-1}+1}=\ldots =n_{r_1+\ldots +r_s}.\]
The cumbersome subscripts can be avoided by defining
\(m_j:=n_{r_1+\ldots +r_j}\) for each \(1\le j\le s\).
Then

\begin{equation}
\label{eqn:FinAbGrp2}
A\simeq\left(\mathbb{Z}/p^{m_1}\mathbb{Z}\right)^{r_1}\oplus\ldots\oplus\left(\mathbb{Z}/p^{m_s}\mathbb{Z}\right)^{r_s}
\end{equation}

\noindent
and we can define:



\begin{definition}
\label{dfn:ATI}
The \textit{abelian type invariants} (ATI) of \(A\) are given by the sequence

\begin{equation}
\label{eqn:ATILogForm}
(m_1^{r_1},\ldots,m_s^{r_s})
\end{equation}

\noindent
of strictly increasing positive integers \(m_1<\ldots <m_s\)
with multiplicities \(r_1,\ldots,r_s\) written as formal exponents
indicating iteration.
\end{definition}



\begin{remark}
\label{rmk:ATI}
The integers \(m_j\) are the \(p\)-logarithms
of the elementary divisors \(d_i\).

\begin{enumerate}

\item
For abelian type invariants of high complexity,
the \textit{logarithmic form} in Definition
\ref{dfn:ATI}
requires considerably less space (e.g. in tables) than the usual \textit{power form}

\begin{equation}
\label{eqn:ATIPowForm}
(\overbrace{p^{m_1},\ldots,p^{m_1}}^{r_1},\ldots,\overbrace{p^{m_s},\ldots,p^{m_s}}^{r_s}).
\end{equation}

\item
For brevity, we can even omit the commas separating the entries of the
logarithmic form of abelian type invariants,
provided all the \(m_j\) remain smaller than \(10\).

\item
A further advantage of the brief logarithmic notation is the
independence of the prime \(p\),
in particular when \(p\)-groups with distinct \(p\) are being compared.

\item
Finally, since our preference is to select generators of finite \(p\)-groups
with decreasing orders,
we agree to write abelian type invariants from the right to the left,
in both forms.

\end{enumerate}

\end{remark}

\begin{example}
\label{exm:ATI}
For instance, if \(p=3\), then the abelian type invariants
\((21^4)\) in logarithmic form correspond to the power form \((9,3,3,3,3)\) and
\((2^21^2)\) corresponds to \((9,9,3,3)\).
\end{example}

Now let \(G\) be an arbitrary finite \(p\)-group or infinite topological pro-\(p\) group
with derived subgroup \(G^\prime\) and finite abelianization \(G^{\mathrm{ab}}=G/G^\prime\).



\begin{definition}
\label{dfn:AQI}
The abelian type invariants of the commutator quotient group \(G^{\mathrm{ab}}\)
are called the \textit{abelian quotient invariants} (AQI) of \(G\).
\end{definition}



\section{Index-\(p\) abelianization data}
\label{s:IPAD}

Let \(p\) be a fixed prime number
and \(K\) be a number field with \(p\)-class group \(\mathrm{Cl}_p(K)\)
of order \(p^v\), where \(v\ge 0\) denotes a non-negative integer.

According to the Artin reciprocity law of class field theory
\cite{Ar1},
\(\mathrm{Cl}_p(K)\) is isomorphic to
the commutator quotient group \(G/G^\prime\)
of the Galois group \(G=\mathrm{Gal}(\mathrm{F}_p^\infty(K)\mid K)\)
of the maximal unramified pro-\(p\) extension \(\mathrm{F}_p^\infty(K)\) of \(K\).
\(G\) is called the \(p\)-\textit{tower group} of \(K\).
The fixed field of the commutator subgroup \(G^\prime\) in \(\mathrm{F}_p^\infty(K)\)
is the maximal abelian unramified \(p\)-extension of \(K\),
that is the (first) Hilbert \(p\)-class field \(\mathrm{F}_p^1(K)\) of \(K\)
with Galois group \(\mathrm{Gal}(\mathrm{F}_p^1(K)\mid K)\simeq G/G^\prime\).
The derived subgroup \(G^\prime\) is a closed (and open) subgroup of finite index
\((G:G^\prime)=p^v\) in the topological pro-\(p\) group \(G\).



\begin{definition}
\label{dfn:LayerNT}
For each integer \(0\le n\le v\),
the system

\begin{equation}
\label{eqn:LayerNT}
\mathrm{Lyr}_n(K)=\lbrace K\le L\le\mathrm{F}_p^1(K)\mid\lbrack L:K\rbrack=p^n\rbrace
\end{equation}

\noindent
of intermediate fields \(K\le L\le\mathrm{F}_p^1(K)\)
with relative degree \(\lbrack L:K\rbrack=p^n\)
is called the \(n\)-\textit{th layer} of abelian unramified \(p\)-extensions of \(K\).
In particular,
for \(n=0\), \(K\) forms the \textit{bottom layer}
\(\mathrm{Lyr}_0(K)=\lbrace K\rbrace\),
and for \(n=v\), \(\mathrm{F}_p^1(K)\) forms the \textit{top layer}
\(\mathrm{Lyr}_v(K)=\lbrace\mathrm{F}_p^1(K)\rbrace\).
\end{definition}

Now let \(0\le n\le v\) be a fixed integer
and suppose that \(K\le L\le\mathrm{F}_p^1(K)\) belongs to the \(n\)-th layer.
Then the Galois group \(H=\mathrm{Gal}(\mathrm{F}_p^\infty(K)\mid L)\)
is of finite index \((G:H)=\lbrack L:K\rbrack=p^n\) in the \(p\)-tower group \(G\) of \(K\)
and the quotient \(G/H\simeq\mathrm{Gal}(L\mid K)\) is abelian,
since \(H\) contains the commutator subgroup
\(G^\prime=\mathrm{Gal}(\mathrm{F}_p^\infty(K)\mid\mathrm{F}_p^1(K))\) of \(G\).



\begin{definition}
\label{dfn:LayerGT}
For each integer \(0\le n\le v\),
the system

\begin{equation}
\label{eqn:LayerGT}
\mathrm{Lyr}_n(G)=\lbrace G^\prime\le H\le G\mid (G:H)=p^n\rbrace
\end{equation}

\noindent
of intermediate groups \(G^\prime\le H\le G\)
with index \((G:H)=p^n\)
is called the \(n\)-\textit{th layer} of normal subgroups of \(G\) with abelian quotients \(G/H\).
In particular,
for \(n=0\), \(G\) forms the \textit{top layer}
\(\mathrm{Lyr}_0(G)=\lbrace G\rbrace\),
and for \(n=v\), \(G^\prime\) forms the \textit{bottom layer}
\(\mathrm{Lyr}_v(K)=\lbrace G^\prime\rbrace\).
\end{definition}

A further application of Artin's reciprocity law
\cite{Ar1}
shows that

\begin{equation}
\label{eqn:ArtinReciprocity}
H/H^\prime=\mathrm{Gal}(\mathrm{F}_p^\infty(K)\mid L)/\mathrm{Gal}(\mathrm{F}_p^\infty(K)\mid\mathrm{F}_p^1(L))
\simeq\mathrm{Gal}(\mathrm{F}_p^1(L))\mid L)\simeq\mathrm{Cl}_p(L),
\end{equation}

\noindent
for every subgroup \(H\in\mathrm{Lyr}_n(G)\) and its corresponding extension field \(L\in\mathrm{Lyr}_n(K)\),
where \(0\le n\le v\) is fixed (but arbitrary).

Since the abelianization \(H^{\mathrm{ab}}=H/H^\prime\) forms the target of the
Artin transfer homomorphism \(T_{G,H}:\,G\to H/H^\prime\) from \(G\) to \(H\),
we introduced a preliminary instance of the following terminology in
\cite[Dfn.1.1, p.403]{Ma4}.



\begin{definition}
\label{dfn:TTT}
For each integer \(0\le n\le v\),
the multiplet \(\tau_n(G)=(H/H^\prime)_{H\in\mathrm{Lyr}_n(G)}\),
where each member \(H/H^\prime\) is interpreted rather as its abelian type invariants,
is called the \(n\)-th layer of the \textit{transfer target type} (TTT)
of the pro-\(p\) group \(G\),

\begin{equation}
\label{eqn:TTTGT}
\tau(G)=\lbrack\tau_0(G);\ldots;\tau_v(G)\rbrack,
\text{ where }\tau_n(G)=(H/H^\prime)_{H\in\mathrm{Lyr}_n(G)}
\text{ for each }0\le n\le v.
\end{equation}

\noindent
Similarly, the multiplet \(\tau_n(K)=(\mathrm{Cl}_p(L))_{L\in\mathrm{Lyr}_n(K)}\),
where each member \(\mathrm{Cl}_p(L)\) is interpreted rather as its abelian type invariants,
is called the \(n\)-th layer of the \textit{transfer target type} (TTT)
of the number field \(K\),

\begin{equation}
\label{eqn:TTTNT}
\tau(K)=\lbrack\tau_0(K);\ldots;\tau_v(K)\rbrack,
\text{ where }\tau_n(K)=(\mathrm{Cl}_p(L))_{L\in\mathrm{Lyr}_n(K)}
\text{ for each }0\le n\le v.
\end{equation}

\end{definition}



\begin{remark}
\label{rmk:TTT}

\begin{enumerate}

\item
If it is necessary to specify the underlying prime number \(p\), then
the symbol \(\tau(p,G)\), resp. \(\tau(p,K)\), can  be used for the TTT.

\item
Suppose that \(0<n<v\).
If an ordering is defined for the elements of
\(\mathrm{Lyr}_n(G)\), resp. \(\mathrm{Lyr}_n(K)\),
then the same ordering is applied to the members of
the layer \(\tau_n(G)\), resp. \(\tau_n(K)\),
and the TTT layer is called \textit{ordered}.
Otherwise, the TTT layer is called \textit{unordered}
or \textit{accumulated},
since equal components are collected in powers
with formal exponents denoting iteration.

\item
In view of the considerations in Equation
(\ref{eqn:ArtinReciprocity}),
it is clear that we have the equality

\begin{equation}
\label{eqn:TTT}
\tau(G)=\tau(K),
\end{equation}

\noindent
in the sense of componentwise isomorphisms.

\end{enumerate}

\end{remark}

Since it is increasingly difficult to compute the structure of the \(p\)-class groups \(\mathrm{Cl}_p(L)\)
of extension fields \(L\in\mathrm{Lyr}_n(K)\) in higher layers with \(n\ge 2\),
it is frequently sufficient to make use of information in the first layer only,
that is the layer of subgroups with index \(p\).
Therefore, Boston, Bush and Hajir
\cite{BBH}
invented the following \textit{first order approximation} of the TTT,
a concept which had been used in earlier work already
\cite{BoLG,Bu,BaBu,BoNo,No},
without explicit terminology.



\begin{definition}
\label{dfn:IPAD}
The restriction

\begin{equation}
\label{eqn:IPAD1}
\begin{aligned}
\tau^{(1)}(G) &=\lbrack\tau_0(G);\tau_1(G)\rbrack,\text{ resp.}\\
\tau^{(1)}(K) &=\lbrack\tau_0(K);\tau_1(K)\rbrack,
\end{aligned}
\end{equation}

\noindent
of the TTT \(\tau(G)\), resp. \(\tau(K)\), to the zeroth and first layer
is called the \textit{index-\(p\) abelianization data} (IPAD) of \(G\), resp. \(K\).
\end{definition}

So, the complete TTT is an extension of the IPAD.
However, there also exists another extension of the IPAD
which is not covered by the TTT.
It has also been used already in previous investigations by Boston, Bush and Nover
\cite{Bu,BoNo,No}
and is constructed from the usual IPAD \(\lbrack\tau_0(K);\tau_1(K)\rbrack\) of \(K\),
firstly, by observing that \(\tau_1(K)=(\mathrm{Cl}_p(L))_{L\in\mathrm{Lyr}_1(K)}\)
can be viewed as \(\tau_1(K)=(\tau_0(L))_{L\in\mathrm{Lyr}_1(K)}\) and,
secondly, by extending each \(\tau_0(L)\) to
the IPAD \(\lbrack\tau_0(L);\tau_1(L)\rbrack\) of \(L\).



\begin{definition}
\label{dfn:IteratedIPAD}
The family

\begin{equation}
\label{eqn:IPAD2}
\begin{aligned}
\tau^{(2)}(G) &=\lbrack\tau_0(G);(\lbrack\tau_0(H);\tau_1(H)\rbrack)_{H\in\mathrm{Lyr}_1(G)}\rbrack,\text{ resp.}\\
\tau^{(2)}(K) &=\lbrack\tau_0(K);(\lbrack\tau_0(L);\tau_1(L)\rbrack)_{L\in\mathrm{Lyr}_1(K)}\rbrack,
\end{aligned}
\end{equation}

\noindent
is called the \textit{iterated IPAD of second order} of \(G\), resp. \(K\).
\end{definition}

The concept of iterated IPADs as given in Dfn.
\ref{dfn:IteratedIPAD}
is restricted to the second order and first layers,
and thus is open for further generalization
(higher orders and higher layers).
Since it could be useful for \(2\)-power extensions,
whose absolute degrees increase moderately
and remain manageable by MAGMA or PARI,
we briefly indicate how the \textit{iterated IPAD of third order} could be defined:

\begin{equation}
\label{eqn:IPAD3}
\begin{aligned}
\tau^{(3)}(G) &=\lbrack\tau_0(G);
(\lbrack\tau_0(H);(\lbrack\tau_0(I);\tau_1(I)\rbrack)_{I\in\mathrm{Lyr}_1(H)}\rbrack)_{H\in\mathrm{Lyr}_1(G)}\rbrack,\text{ resp.}\\
\tau^{(3)}(K) &=\lbrack\tau_0(K);
(\lbrack\tau_0(L);(\lbrack\tau_0(M);\tau_1(M)\rbrack)_{M\in\mathrm{Lyr}_1(L)}\rbrack)_{L\in\mathrm{Lyr}_1(K)}\rbrack.
\end{aligned}
\end{equation}



\subsection{Sporadic IPADs}
\label{ss:SporadicIPADs}

In the next two central theorems, we present complete specifications of all possible IPADs
of pro-\(p\) groups \(G\) for \(p=3\) and the simplest case of an abelianization
\(G/G^\prime\) of type \((3,3)\).
We start with pro-\(3\)-groups \(G\) whose metabelianizations \(G/G^{\prime\prime}\)
are vertices on sporadic parts of coclass graphs outside of coclass trees.

Since the abelian type invariants of the members of TTT layers
will depend on the parity of the nilpotency class \(c\) or coclass \(r\),
a more economic notation,
avoiding the tedious distinction of the cases odd or even,
is provided by the following definition.

\begin{definition}
\label{dfn:NearlyHomocyclic}
For an integer \(n\ge 2\),
the \textit{nearly homocyclic abelian \(3\)-group} \(A(3,n)\) of order \(3^n\)
is defined by its type invariants \((q+r,q)\hat{=}(3^{q+r},3^q)\),
where the quotient \(q\ge 1\) and the remainder \(0\le r<2\)
are determined uniquely by the Euclidean division \(n=2q+r\).
Two degenerate cases are included by putting
\(A(3,1)=(1)\hat{=}(3)\) the cyclic group \(C_3\) of order \(3\) and
\(A(3,0)=(0)\hat{=}1\) the trivial group of order \(1\).
\end{definition}

\begin{theorem}
\label{thm:Sporadic3x3}
(First Main Theorem on \(p=3\), \(G/G^\prime\simeq (3,3)\), and \(G/G^{\prime\prime}\) of small class)\\
Let \(G\) be a pro-\(3\) group having a
transfer target type \(\tau(G)=\lbrack\tau_0(G);\tau_1(G);\tau_2(G)\rbrack\)
with top layer component \(\tau_0(G)=1^2\).
Let \(0\le k\le 1\) denote the defect of commutativity
\cite[\S\ 3.1.1, p.412, and \S\ 3.3.2, p.429]{Ma4}
of the metabelianization \(G/G^{\prime\prime}\) of \(G\).
Then the ordered first layer \(\tau_1(G)\) and the bottom layer \(\tau_2(G)\)
are given in the following way.

\begin{enumerate}
\item
If \(G/G^{\prime\prime}\) is of coclass \(\mathrm{cc}(G/G^{\prime\prime})=1\)
and nilpotency class \(\mathrm{cl}(G/G^{\prime\prime})=c\le 3\), then

\begin{equation}
\label{eqn:Cc1TTTLoCl}
\begin{aligned}
\tau_1(G) &=(1)^4;\ \tau_2(G)=(0),\text{ if }c=1,\ G\simeq\langle 9,2\rangle,\\
\tau_1(G) &=(1^2)^4;\ \tau_2(G)=(1),\text{ if }c=2,\ G\simeq\langle 27,3\rangle,\\
\tau_1(G) &=(1^2,\mathbf{(2)^3});\ \tau_2(G)=(1),\text{ if }c=2,\ G\simeq\langle 27,4\rangle,\\
\tau_1(G) &=(\mathbf{1^3},(1^2)^3);\ \tau_2(G)=(1^2),\text{ if }c=3,\ G\simeq\langle 81,7\rangle,\\
\tau_1(G) &=(21,(1^2)^3);\ \tau_2(G)=(1^2),\text{ if }c=3,\ G\simeq\langle 81,8\vert 9\vert 10\rangle,\\
\end{aligned}
\end{equation}

\noindent
where generally \(G^{\prime\prime}=1\).

\item
If \(G/G^{\prime\prime}\) is of coclass \(\mathrm{cc}(G/G^{\prime\prime})=2\)
and nilpotency class \(\mathrm{cl}(G/G^{\prime\prime})=c=3\), then

\begin{equation}
\label{eqn:Cc2TTTCl3}
\begin{aligned}
\tau_1(G) &=((21)^2,1^3,21);\ \tau_2(G)=(1^3),\text{ if }
G\simeq\langle 243,5\rangle\text{ or }G/G^{\prime\prime}\simeq\langle 243,6\rangle,\\
\tau_1(G) &=((21)^2,(1^3)^2);\ \tau_2(G)=(1^3),\text{ if }G/G^{\prime\prime}\simeq\langle 243,3\rangle,\\
\tau_1(G) &=(1^3,21,1^3,21);\ \tau_2(G)=(1^3),\text{ if }G\simeq\langle 243,7\rangle,\\
\tau_1(G) &=(\mathbf{(1^3)^2},21,1^3);\ \tau_2(G)=(1^3),\text{ if }G/G^{\prime\prime}\simeq\langle 243,4\rangle,\\
\tau_1(G) &=(21)^4;\ \tau_2(G)=(1^3),\text{ if }G/G^{\prime\prime}\simeq\langle 243,8\vert 9\rangle,\\
\end{aligned}
\end{equation}

\noindent
where \(G^{\prime\prime}=1\) can be warranted for \(G/G^{\prime\prime}\simeq\langle 243,5\vert 7\rangle\) only.

However, if \(\mathrm{cl}(G/G^{\prime\prime})=c=4\) with \(k=1\), then

\begin{equation}
\label{eqn:Cc2TTTCl4}
\begin{aligned}
\tau_1(G) &=((21)^2,(1^3)^2);\ \tau_2(G)=(21^2),\text{ if }G/G^{\prime\prime}\simeq\langle 729,37\vert 38\vert 39\rangle,\\
\tau_1(G) &=((21)^2,(1^3)^2);\ \tau_2(G)=\mathbf{(1^4)},\text{ if }G/G^{\prime\prime}\simeq\langle 729,34\vert 35\vert 36\rangle,\\
\tau_1(G) &=((1^3)^2,21,1^3);\ \tau_2(G)=(21^2),\text{ if }G/G^{\prime\prime}\simeq\langle 729,44\vert 45\vert 46\vert 47\rangle,\\
\tau_1(G) &=(21)^4;\ \tau_2(G)=\mathbf{(1^4)},\text{ if }G/G^{\prime\prime}\simeq\langle 729,56\vert 57\rangle.\\
\end{aligned}
\end{equation}

\item
If \(G/G^{\prime\prime}\) is of coclass \(\mathrm{cc}(G/G^{\prime\prime})=r\ge 3\)
and nilpotency class \(\mathrm{cl}(G/G^{\prime\prime})=c=r+1\), then

\begin{equation}
\label{eqn:HiCcTTTClMin}
\tau_1(G)=(A(3,r+1)^2,(1^2)^3);\ \tau_2(G)=A(3,r)\times A(3,r-1)\text{ and }k=0.
\end{equation}

\noindent
However, if \(c=r+2\), then

\begin{equation}
\label{eqn:HiCcTTTCrit}
\begin{aligned}
\tau_1(G) &=(A(3,r+2),A(3,r+1),(1^2)^3);\ \tau_2(G)=A(3,r+1)\times A(3,r-1),\text{ if }k=0\\
\tau_1(G) &=(A(3,r+1)^2,(1^2)^3);\ \tau_2(G)=A(3,r+1)\times A(3,r-1),\text{ if }k=1,\text{ regular case},\\
\tau_1(G) &=(A(3,r+1)^2,(1^2)^3);\ \tau_2(G)=\mathbf{A(3,r)\times A(3,r)},\text{ if }k=1,\text{ irregular case},\\
\end{aligned}
\end{equation}

\noindent
where the irregular case can only occur for even class and coclass \(c=r+2\equiv 0\pmod{2}\),
positive defect of commutativity \(k=1\), and relational parameter \(\rho=-1\) in
\cite[Eqn.(3.6), p.424]{Ma3}
or
\cite[Eqn.(3.3), p.430]{Ma4}.

\end{enumerate}

\end{theorem}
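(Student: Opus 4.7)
The plan is to reduce the theorem to a finite computation on the metabelianization $M := G/G''$, and then to consult the known classification of metabelian $3$-groups with abelianization of type $(3,3)$ via the coclass graphs $\mathcal{G}(3,r)$. Observe first that for every subgroup $G'\le H\le G$ one has $H'\supseteq [G',G']=G''$, so the Artin target $H/H'$ coincides with the abelianization of $\bar H:=H/G''$ regarded as a subgroup of $M$. Consequently the layers $\tau_0(G),\tau_1(G),\tau_2(G)$ are invariants of $M$ alone, and in particular $\tau_2(G)=G'/G''=M'$ since $M$ is metabelian. This replaces the statement about a pro-$3$ group $G$ by a parallel statement about its finite metabelian quotient $M$.

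The next step is to enumerate the candidates for $M$. For part (1), the metabelian $3$-groups of coclass $1$ with $M^{\mathrm{ab}}\simeq(3,3)$ and $\mathrm{cl}(M)\le 3$ form the bottom of the unique coclass tree rooted at $\langle 9,2\rangle$, yielding precisely the five cases listed in (\ref{eqn:Cc1TTTLoCl}). For parts (2) and (3), I would invoke the parametric pc-presentations of the metabelian vertices on $\mathcal{G}(3,r)$ for $r\ge 2$ given in \cite{Ma3}: each such $M$ is pinned down by the triple $(c,r,k)$ and, in the irregular situation of part (3), by the relational parameter $\rho\in\{-1,0,+1\}$ of \cite[Eqn.~(3.6)]{Ma3}.

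With explicit presentations in hand, the computation of $\tau_1$ and $\tau_2$ becomes routine commutator arithmetic inside the metabelian group $M$. Each of the four maximal subgroups $H_i$ has the shape $H_i=\langle g_i\rangle\, M'$ with $g_i^3\in M'$; since $M'$ is abelian, $H_i'=[g_i,M']$ at once, so $H_i/H_i'$ is an extension of $\langle\bar g_i\rangle$ by the cokernel of the endomorphism $x\mapsto [g_i,x]$ of $M'$, and its abelian type invariants are read off from the elementary divisors of that endomorphism. This finishes part (1) immediately; for the five rows of (\ref{eqn:Cc2TTTCl3}) and the four rows of (\ref{eqn:Cc2TTTCl4}) the same calculation is applied on each of the SmallGroups in question, the listed ATIs simply recording the position of the distinguished maximal subgroup determined by the transfer kernel type. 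For part (3), the periodicity of the mainline on $\mathcal{G}(3,r)$ supplies a uniform argument: three of the four generators $g_i$ always produce quotients of type $(1^2)$, while the remaining one or two yield the nearly homocyclic groups $A(3,r+1)$ or $A(3,r+2)$ prescribed by $c$ and $r$.

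The principal obstacle is the irregular sub-case of (\ref{eqn:HiCcTTTCrit}). Here, with $c=r+2$ even and $k=1$, one must show that the choice $\rho=-1$ collapses $\tau_2=M'$ from the generic form $A(3,r+1)\times A(3,r-1)$ to the exceptional $A(3,r)\times A(3,r)$, while $\tau_1$ is undisturbed. I would verify this by substituting $\rho=-1$ into the second commutator relation of \cite[Eqn.~(3.6)]{Ma3}, computing the Smith normal form of the resulting relation matrix on $M'$, and exhibiting the extra $3$-torsion that shifts one elementary divisor up and the other down. The parity restriction $c=r+2\equiv 0\pmod{2}$ then emerges automatically: for odd class the relation carrying $\rho$ is forced to be trivial by the length of the lower central series of $M'$, so the value $\rho=-1$ simply cannot occur, and the irregular row is vacuous.
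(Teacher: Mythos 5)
Your overall strategy -- replace \(G\) by its metabelianization \(M=G/G''\) (noting \(\tau_n(G)=\tau_n(M)\) for \(n=0,1,2\) and \(\tau_2=M'\)), then evaluate the four maximal subgroups from the parametrized presentations of the metabelian \(3\)-groups with abelianization \((3,3)\) -- is sound, and it is in substance the computation underlying the result. The paper itself, however, proves the theorem purely by reduction to the earlier classification: after translating invariants (\(m=c+1\), \(e=r+1\)) and matching SmallGroups identifiers with transfer kernel types via the figures of \cite{Ma3}, it reads \(\tau_1\) off \cite[Thm.4.1--4.5 and the accompanying tables]{Ma3}, subsumes the regular cases of part (3) under Theorem \ref{thm:Sequences3x3}, and takes the exceptional bottom layer of the irregular case from \cite[Thm.8.8]{Ma3}. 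So what the citation route buys is brevity and the guarantee that the ordering conventions and the \(\rho=-1\) case are exactly those already established; what your route buys is transparency of the mechanism, at the price of actually carrying out the commutator and Smith-normal-form computations, which you only promise (in particular the parity argument excluding the irregular case for odd class is asserted, not derived).

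Two concrete gaps remain. First, your recipe that the abelian type invariants of \(H_i/H_i'\) ``are read off from the elementary divisors of the endomorphism \(x\mapsto\lbrack g_i,x\rbrack\) of \(M'\)'' is insufficient: \(H_i/H_i'\) is generated by the image of \(g_i\) together with \(M'/\lbrack g_i,M'\rbrack\), and its type also depends on the coset of \(g_i^3\) in \(M'/\lbrack g_i,M'\rbrack\), i.e.\ on the power relations, not only on the commutator map. The very first lines of (\ref{eqn:Cc1TTTLoCl}) show this: for \(\langle 27,3\rangle\) and \(\langle 27,4\rangle\) the subgroup \(M'\) is central of order \(3\), so the commutator endomorphism and its cokernel are identical in both groups, yet \(\tau_1\) is \((1^2)^4\) in one case and \((1^2,(2)^3)\) in the other, the distinction coming precisely from whether \(g_i^3\) generates \(M'\). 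Second, the clauses about \(G''\) -- ``generally \(G''=1\)'' in part (1) and ``\(G''=1\) can be warranted for \(G/G''\simeq\langle 243,5\vert 7\rangle\) only'' in part (2) -- are assertions about the pro-\(3\) group \(G\) itself, not about \(M\); your opening reduction discards exactly the information they concern, and nothing later in the plan returns to it. Proving them requires knowing which of the listed metabelian groups do or do not admit non-metabelian pro-\(3\) covers with the same metabelianization (descendant-tree/relation-rank input of the kind used in \cite{BuMa} and \cite{Ma3}), so as written your proposal does not establish these parts of the statement.
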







\begin{proof}

Since this proof heavily relies on our earlier paper
\cite{Ma3},
it should be pointed out that, for a \(p\)-group \(G\),
the index of nilpotency \(m=c+1\) is used generally instead of the
nilpotency class \(\mathrm{cl}(G)=c=m-1\)
and the invariant \(e=r+1\) frequently (but not always) replaces the
coclass \(\mathrm{cc}(G)=r=e-1\)
in that paper.

\begin{enumerate}

\item
Using the association between the identifier of \(G\) in the SmallGroups Library
\cite{BEO1,BEO2}
and the transfer kernel type (TKT)
\cite{Ma2},
which is visualized in
\cite[Fig.3.1, p.423]{Ma3},
this claim follows from
\cite[Thm.4.1, p.427, and Tbl.4.1, p.429]{Ma3}.

\item
For \(c=3\), resp. \(c=4\) with \(k=1\), the statement is a consequence of
\cite[Thm.4.2 and Tbl.4.3, p.434]{Ma3},
resp.
\cite[Thm.4.3 and Tbl.4.5, p.438]{Ma3},
when the association between the identifier of \(G\) in the SmallGroups Database
and the TKT is taken into consideration,
as visualized in
\cite[Fig.4.1, p.433]{Ma3}.

\item
All the regular cases behave completely similar as the general case in Theorem
\ref{thm:Sequences3x3}, item (3), Equation
(\ref{eqn:HiCcTTT}).
In the irregular case, only the bottom layer \(\tau_2(G)\),
consisting of the abelian quotient invariants \(G^\prime/G^{\prime\prime}\) of the derived subgroup \(G^\prime\),
is exceptional and must be taken from
\cite[Appendix \S\ 8, Thm.8.8, p.461]{Ma3}.
\end{enumerate}

\end{proof}



\subsection{Infinite IPAD sequences}
\label{ss:IPADSequences}

Now we come to the IPADs of pro-\(p\)-groups \(G\)
whose metabelianizations \(G/G^{\prime\prime}\) are members of infinite periodic sequences,
inclusively mainlines, of coclass trees.



\begin{theorem}
\label{thm:Sequences3x3}
(Second Main Theorem on \(p=3\), \(G/G^\prime\simeq (3,3)\), and \(G/G^{\prime\prime}\) of large class)\\
Let \(G\) be a pro-\(3\) group having a
transfer target type \(\tau(G)=\lbrack\tau_0(G);\tau_1(G);\tau_2(G)\rbrack\)
with top layer component \(\tau_0(G)=1^2\).
Let \(0\le k\le 1\) denote the defect of commutativity
\cite[\S\ 3.1.1, p.412, and \S\ 3.3.2, p.429]{Ma4}
of the metabelianization \(G/G^{\prime\prime}\) of \(G\).
Then the ordered first layer \(\tau_1(G)\) and the bottom layer \(\tau_2(G)\)
are given in the following way.

\begin{enumerate}
\item
If \(G/G^{\prime\prime}\) is of coclass \(\mathrm{cc}(G/G^{\prime\prime})=1\)
and nilpotency class \(\mathrm{cl}(G/G^{\prime\prime})=c\ge 4\), then

\begin{equation}
\label{eqn:Cc1TTT}
\begin{aligned}
\tau_1(G) &=(A(3,c-k),(1^2)^3); \\
\tau_2(G) &=A(3,c-1).
\end{aligned}
\end{equation}

\item
If \(G/G^{\prime\prime}\) is of coclass \(\mathrm{cc}(G/G^{\prime\prime})=2\)
and nilpotency class \(\mathrm{cl}(G/G^{\prime\prime})=c\ge 5\),
or \(c=4\) with \(k=0\), then

\begin{equation}
\label{eqn:Cc2TTT1}
\begin{aligned}
\tau_1(G) &=(A(3,c-k),21,(1^3)^2)\text{ or} \\
\tau_1(G) &=(A(3,c-k),21,1^3,21)\text{ or} \\
\tau_1(G) &=(A(3,c-k),(21)^3), \\
\end{aligned}
\end{equation}

\noindent
in dependence on the coclass tree
\(G/G^{\prime\prime}\in\mathcal{T}^2(\langle 729,i\rangle)\), \(i\in\lbrace 40,49,54\rbrace\),
but uniformly

\begin{equation}
\label{eqn:Cc2TTT2}
\tau_2(G)=A(3,c-1)\times A(3,1).
\end{equation}

\item
If \(G/G^{\prime\prime}\) is of coclass \(\mathrm{cc}(G/G^{\prime\prime})=r\ge 3\)
and nilpotency class \(\mathrm{cl}(G/G^{\prime\prime})=c\ge r+3\),
or \(c=r+2\) with \(k=0\), then

\begin{equation}
\label{eqn:HiCcTTT}
\begin{aligned}
\tau_1(G) &=(A(3,c-k),A(3,r+1),(1^2)^3); \\
\tau_2(G) &=A(3,c-1)\times A(3,r-1).
\end{aligned}
\end{equation}

\end{enumerate}

The first member \(H_1/H_1^\prime\) of the ordered first layer \(\tau_1(G)\)
reveals a uni-polarization (dependence on the nilpotency class \(c\))
whereas the other three members \(H_i/H_i^\prime\), \(2\le i\le 4\),
show a stabilization (independence of \(c\)) for fixed coclass \(r\).

\end{theorem}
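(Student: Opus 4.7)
The plan is to reduce the entire calculation to a question about the metabelianization $\bar G := G/G''$ and then invoke the classification of metabelian $3$-groups on coclass trees established in \cite{Ma3}. First I would observe that every subgroup $H \in \mathrm{Lyr}_1(G)$ contains $G'$, so $H' \supseteq [G',G'] = G''$; therefore $H/H'$ depends only on the image $\bar H := H/G''$ in $\bar G$. The same reduction is automatic for $\tau_2(G) = G'/G''$. Consequently $\tau(G) = \tau(\bar G)$ componentwise in layers $0$, $1$, $2$, and the theorem becomes a purely metabelian statement.

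Next I would stratify by the coclass $r = \mathrm{cc}(\bar G)$. For each $r$, the metabelian quotients on a given coclass tree $\mathcal{T}^r$ admit a uniform polycyclic presentation from \cite{Ma3}, parametrized by the nilpotency class $c$ and a small list of relational parameters (the defect of commutativity $k$ among them). Choosing the two generators $x, y$ of $\bar G$ so that $x$ is the distinguished \emph{polarization} direction, the four maximal subgroups $H_i$ of index $3$ are listed explicitly. I would then compute each $H_i/H_i'$ by Schreier rewriting applied to the presentation and by reducing the resulting relation matrix to Smith normal form; this is routine once the presentation is in place.

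The key structural fact which makes the result uniform is that exactly one of the four maximal subgroups, namely the one containing $x$, picks up the class $c$ through the lower central series and yields the nearly homocyclic factor $A(3, c-k)$, with the defect $k$ shifting the exponent by one whenever $\bar G$ lies off the mainline. The remaining three subgroups involve only the $y$-direction and stabilize at $(1^2)^3$, $A(3,r+1)$, or $21$, depending on which coclass tree hosts $\bar G$; this is precisely the trichotomy in case (2) for the three trees $\mathcal{T}^2(\langle 729, i\rangle)$ with $i \in \{40, 49, 54\}$. Case (1) follows from \cite[Thm.~4.1]{Ma3}, case (2) from \cite[Thms.~4.2, 4.3]{Ma3}, and case (3) from the general coclass results in \cite[Appendix \S\,8]{Ma3}, lifted from the metabelian setting to the pro-$3$ setting via the reduction in the first paragraph.

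The main obstacle I expect is the bottom layer $\tau_2(G) = \bar G'$ in the boundary cases $c = r+2$ with $k = 0$. Since $\bar G$ is metabelian with $\bar G/\bar G' \simeq (3,3)$, the group $\bar G'$ is abelian of order $3^{c+r-2}$, and one must show that it splits as $A(3,c-1) \times A(3, r-1)$ rather than admitting a shorter invariant-factor decomposition. The regularity condition --- i.e.\ the avoidance of the irregular configuration appearing in Theorem~\ref{thm:Sporadic3x3}, item (3), last line --- is exactly what prevents such collapse. Verifying the splitting uniformly across all trees and all $c$ is the delicate step, and it is the point where the induction along the coclass tree (stepping from class $c$ to class $c+1$ via a central extension of order $3$) must be checked to preserve the invariant-factor shape of $\bar G'$.
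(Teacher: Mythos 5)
Your proposal is correct and follows essentially the same route as the paper: the published proof also rests entirely on the metabelian classification in \cite{Ma3} (Thm.~3.1/3.2 for the polarized component, Thm.~4.4/4.5 with Tbl.~4.7 for the stabilized components, and Thm.~8.8 of the Appendix for the bottom layer \(G'/G''\)), with the reduction from \(G\) to \(G/G''\) — which you make explicit via \(H'\supseteq G''\) for every \(H\in\mathrm{Lyr}_1(G)\) — left implicit there. The only divergence is cosmetic: you sketch how those cited computations would be re-derived (polycyclic presentations, Schreier rewriting, Smith normal form) and flag the invariant-factor splitting of \(\tau_2\) in the boundary case \(c=r+2\), \(k=0\) as the delicate point, whereas the paper simply quotes the corresponding statements of \cite{Ma3}.
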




\begin{proof}

Again, we make use of
\cite{Ma3},
and we point out that, for a \(p\)-group \(G\),
the index of nilpotency \(m=c+1\) is used generally instead of the
nilpotency class \(\mathrm{cl}(G)=c=m-1\)
and the invariant \(e=r+1\) frequently (but not always) replaces the
coclass \(\mathrm{cc}(G)=r=e-1\)
in that paper.

\begin{enumerate}

\item
All components of \(\tau_1(G)\) are given in
\cite[\S\ 3.1, Thm.3.1, Eqn.(3.4)--(3.5), p.421]{Ma3}
when their ordering is defined by the special selection of generators
\cite[\S\ 3.1, Eqn.(3.1)--(3.2), p.420]{Ma3}.
There is only a unique coclass tree with \(3\)-groups of coclass \(1\).

\item
The first component of \(\tau_1(G)\) is given in
\cite[\S\ 3.2, Thm.3.2, Eqn.(3.7), p.424]{Ma3},
and the last three components of \(\tau_1(G)\) are given in
\cite[\S\ 4.5, Thm.4.4, p.440]{Ma3}
and
\cite[\S\ 4.5, Tbl.4.7, p.441]{Ma3},
when their ordering is defined by the special selection of generators
\cite[\S\ 3.2, Eqn.(3.6), p.424]{Ma3}.
The invariant \(\varepsilon\in\lbrace 0,1,2\rbrace\)
\cite{Ma3},
which counts IPAD components of rank \(3\),
decides to which of the mentioned three coclass trees the group \(G\) belongs
\cite[Fig.3.6--3.7, pp.442--443]{Ma4}.

\item
The first two components of \(\tau_1(G)\) are given in
\cite[\S\ 3.2, Thm.3.2, Eqn.(3.7)--(3.8), p.424]{Ma3},
and the last two components of \(\tau_1(G)\) are given in
\cite[\S\ 4.6, Thm.4.5, p.444]{Ma3},
when their ordering is defined by the special selection of generators
\cite[\S\ 3.2, Eqn.(3.6), p.424]{Ma3}.
For coclass bigger than \(2\), it is irrelevant
to which of the four (in the case of odd coclass \(r\))
or six (in the case of even coclass \(r\))
coclass trees the group \(G\) belongs.
The IPAD is independent of this detailed information,
provided that \(c\ge r+3\).
\end{enumerate}

Finally, the bottom layer \(\tau_2(G)\),
consisting of the abelian quotient invariants \(G^\prime/G^{\prime\prime}\) of the derived subgroup \(G^\prime\),
is generally taken from
\cite[Appendix \S\ 8, Thm.8.8, p.461]{Ma3}.

\end{proof}





\section{Componentwise correspondence of IPAD and TKT}
\label{s:Correspondence}

Within this section, where generally \(p=3\), we employ some special terminology.
We say a class of a base field \(K\) remains \textit{resistant} if
it does not capitulate in any unramified cyclic cubic extension \(L\vert K\).
When the \(3\)-class group of \(K\) is of type \((3,3)\)
the next layer of unramified abelian extensions is already the top layer
consisting of the Hilbert \(3\)-class field \(\mathrm{F}_3^1(K)\),
where the resistant class must capitulate,
according to the Hilbert/Artin/Furtw\"angler principal ideal theorem.

Our desire is to show that the components of the ordered IPAD and TKT
are in a strict correspondence to each other.
For this purpose, we use details of the proofs given in
\cite{Ma3},
where generators of metabelian \(3\)-groups \(G\) with \(G/G^\prime\simeq (3,3)\)
were selected in a canonical way, particularly adequate for theoretical aspects.
Since we now prefer a more computational aspect,
we translate the results into a form which is given by the
computational algebra system MAGMA
\cite{MAGMA}.

To be specific, we choose the vertices of two important coclass trees
for illustrating these peculiar techniques.
The vertices of depth (distance from the mainline) at most \(1\)
of both coclass trees,
with roots \(\langle 243,6\rangle\) and \(\langle 243,8\rangle\)
\cite[Fig.3.6--3.7, pp.442--443]{Ma4},
are metabelian \(3\)-groups \(G\) with
order \(\lvert G\rvert\ge 3^5\),
nilpotency class \(c=\mathrm{cl}(G)\ge 3\), and
fixed coclass \(\mathrm{cc}(G)=2\).



\subsection{The coclass tree \(\mathcal{T}^2(\langle 243,6\rangle)\)}
\label{ss:Ord243Id6}

\begin{remark}
\label{rmk:Ord243Id6}
The first layers of the TTT and TKT of vertices of depth at most \(1\)
of the coclass tree \(\mathcal{T}^2(\langle 243,6\rangle)\)
\cite[Fig.3.6, p.442]{Ma4}
consist of four components each, and
share the following common properties
with respect to MAGMA's selection of generators:

\begin{enumerate}
\item
polarization (dependence on the class \(c\)) at the first component,
\item
stabilization (independence of the class \(c\)) at the last three components,
\item
rank \(3\) at the second TTT component (\(\varepsilon=1\) in
\cite{Ma3}).
\end{enumerate}

Using the class \(c\), resp. an asterisk, as wildcard characters,
these common properties can be summarized as follows,
now including the details of the stabilization:

\begin{equation}
\label{eqn:Ord243Id6}
\tau_1(G)=\lbrack A(3,c),1^3,(21)^2\rbrack,\text{ and } \varkappa_1(G)=(\ast,1,2,2).
\end{equation}

\end{remark}

However, to assure the general applicability of the theorems and corollaries in this section,
we aim at independency of the selection of generators
(and thus invariance under permutations).

\begin{theorem}
\label{thm:Ord243Id6}
(in field theoretic terminology)
\begin{enumerate}
\item
The class associated with the polarization becomes principal in the extension with rank \(3\).
\item
The class associated with rank \(3\) becomes principal in both extensions of type \((21)\),
in particular, \(\varkappa_1(G)\) cannot be a permutation and can have at most one fixed point.
\end{enumerate}
\end{theorem}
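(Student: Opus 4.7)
The plan is to reduce both assertions to the ordered form of Remark~\ref{rmk:Ord243Id6}, Equation~(\ref{eqn:Ord243Id6}), and then to invoke permutation-covariance to obtain a statement independent of the particular labelling of the four maximal subgroups of $G/G^\prime$. I would first translate the claim into transfer-kernel language: by Artin reciprocity (\ref{eqn:ArtinReciprocity}), the four unramified cyclic cubic extensions $L_1,\dots,L_4$ of $K$ correspond bijectively to the four maximal subgroups $M_i$ of $G$ containing $G^\prime$, hence to the four cyclic subgroups of order three in $\mathrm{Cl}_3(K)\simeq G/G^\prime$; the generator of the subgroup attached to $L_j$ is what the theorem calls the \emph{class associated with $L_j$}, and a value $\varkappa_1(L_i)=j$ records precisely that this class capitulates in $L_i$.

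The core step is to read off from \cite{Ma3} which index $j$ appears at each position $i$ under MAGMA's ordering. With the canonical choice of generators of \cite[\S~3.2, Eqn.~(3.6)]{Ma3}, the polarized component $A(3,c)$ sits at position $1$, while \cite[\S~3.2, Thm.~3.2, Eqn.~(3.7)]{Ma3} together with \cite[\S~4.5, Thm.~4.4 and Tbl.~4.7]{Ma3} furnish the stabilised components $1^3,(21),(21)$ at positions $2,3,4$ and force $\varkappa_1(G)=(\ast,1,2,2)$. Translated back through the field-theoretic dictionary, this says that the class associated with the polarized extension $L_1$ capitulates in the rank-$3$ extension $L_2$, which is assertion~(1), and that the class associated with $L_2$ capitulates in both extensions $L_3, L_4$ of type $(21)$, which is the substantive half of assertion~(2).

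The two combinatorial consequences then drop out: since $\varkappa_1(L_3)=\varkappa_1(L_4)=2$, the map $\varkappa_1$ is not injective and in particular not a permutation; moreover positions $2,3,4$ all satisfy $\varkappa_1(i)\ne i$, so at most one fixed point (namely possibly position~$1$) can occur. A brief covariance remark upgrades everything to an intrinsic statement: any other ordering of the four maximal subgroups permutes the entries of the IPAD and of $\varkappa_1$ simultaneously, so the intrinsic designations \emph{polarized}, \emph{rank~$3$}, and \emph{$(21)$-type} together with the capitulation relation are transported in step. The step I expect to be the main obstacle is the bookkeeping in the second paragraph: one must verify carefully that the canonical generators of \cite[\S~3.2, Eqn.~(3.6)]{Ma3} can indeed be matched with MAGMA's enumeration of the maximal subgroups in such a way that index $1$ attaches to the polarized component and index $2$ to the rank-$3$ component; once that identification is pinned down, the rest of the argument is formal.
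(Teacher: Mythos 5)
Your proposal is correct and takes essentially the same route as the paper: both reduce the two claims to \cite[\S\ 4.5, Tbl.4.7]{Ma3} and transport the first-layer TKT/TTT between the canonical ordering of \cite{Ma3} and MAGMA's ordering, after which the field-theoretic statements and the non-permutation/at-most-one-fixed-point consequences are immediate. The only slip is bookkeeping: in the canonical ordering the data read \(\varkappa_1(G)=(\ast,3,1,3)\) with \(\tau_1(G)=\lbrack A(3,c),21,1^3,21\rbrack\), so the form \((\ast,1,2,2)\) with \(\lbrack A(3,c),1^3,(21)^2\rbrack\) that you attribute to the canonical generators is already the MAGMA-ordered version --- exactly the permutation the paper performs explicitly, and which your covariance remark renders harmless.
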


\begin{remark}
\label{rmk:Ord243Id6Var}
Aside from the common properties, there also arise variations due to the polarization,
which we first express with respect to MAGMA's selection of generators:

\begin{enumerate}
\item
The TKT is E.6, \(\varkappa_1(G)=(1,1,2,2)\), if and only if
the polarized extension reveals a \textit{fixed point} principalization.
\item
The TKT is E.14, \(\varkappa_1(G)\in\lbrace (3,1,2,2),(4,1,2,2)\rbrace\), if and only if
one of the classes associated with type \((21)\) becomes principal in the polarized extension.
\item
The TKT is H.4, \(\varkappa_1(G)=(2,1,2,2)\), if and only if
the class associated with rank \(3\) becomes principal in the polarized extension.
\item
The TKT is c.18, \(\varkappa_1(G)=(0,1,2,2)\), if and only if
the polarized extension reveals a \textit{total} principalization (indicated by \(0\)).
\end{enumerate}

\end{remark}

\begin{corollary}
\label{cor:Ord243Id6}
(in field theoretic terminology)
\begin{enumerate}
\item
For the TKTs E.6 and H.4, both classes associated with type \((21)\) remain resistant,
for TKT E.14 only one of them.
\item
All extensions satisfy Taussky's condition (B)
\cite{Ta},
with the single exception of of the polarized extension in the case of TKT E.6 or c.18,
which satisfies condition (A).
\item
TKT E.6 has a single fixed point,
E.14 contains a \(3\)-cycle, and
H.4 contains a \(2\)-cycle.
\end{enumerate}
\end{corollary}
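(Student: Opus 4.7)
The plan is to derive all three assertions directly from the explicit TKT values given in Remark \ref{rmk:Ord243Id6Var}, combined with the standard class-field-theoretic dictionary for capitulation and norm on unramified cyclic cubic extensions. Under Artin reciprocity the four fields $L_1,\ldots,L_4\in\mathrm{Lyr}_1(K)$ correspond bijectively to the four order-$3$ subgroups $H_j/G'$ of $\mathrm{Cl}_3(K)\simeq G/G'$. The capitulation kernel $\ker(j_{L_i/K})$ equals $\ker(T_{G,H_i})$, which, by the very definition of the TKT, is the subgroup labelled by $\kappa_i$ (with $\kappa_i=0$ encoding the full group); and the norm image $N_{L_i/K}(\mathrm{Cl}_3(L_i))$ is identified, via the inclusion-induced map $H_i/H_i'\to G/G'$ (well defined since $H_i'\subseteq G'$), with the subgroup $H_i/G'$, i.e.\ the one labelled $i$.

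For part (1), a class labelled $j\in\{1,2,3,4\}$ remains resistant precisely when $j$ does not appear among the components of $\kappa_1(G)$. Inspecting Remark \ref{rmk:Ord243Id6Var}: for E.6 and H.4 the set of values taken by $\kappa_1(G)$ is $\{1,2\}$, so exactly the labels $3$ and $4$ are missing, and these correspond to the two $(21)$-components of $\tau_1(G)$. For E.14 the value set is either $\{1,2,3\}$ or $\{1,2,4\}$, so exactly one of the two $(21)$-classes survives. For part (3), reading $\kappa_1(G)$ as a self-map of $\{1,2,3,4\}$ and tracing orbits gives directly: E.6 has the fixed point $1\mapsto 1$; H.4 has the $2$-cycle $1\mapsto 2\mapsto 1$; and E.14 has the $3$-cycle $1\mapsto 3\mapsto 2\mapsto 1$ or $1\mapsto 4\mapsto 2\mapsto 1$.

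For part (2), the dictionary above shows that Taussky's condition (A), namely $\ker(j_{L_i/K})\cap N_{L_i/K}(\mathrm{Cl}_3(L_i))\neq 1$, is equivalent to the coincidence of the subgroups labelled $\kappa_i$ and $i$, that is, to $\kappa_i\in\{0,i\}$; otherwise condition (B) holds, since two distinct order-$3$ subgroups of the rank-$2$ group $\mathrm{Cl}_3(K)$ intersect trivially. Scanning the four TKTs, the relation $\kappa_i\in\{0,i\}$ is fulfilled only at the polarized component $i=1$ for c.18 ($\kappa_1=0$) and for E.6 ($\kappa_1=1$); every other component satisfies $\kappa_i\notin\{0,i\}$, which yields (2).

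The main obstacle is pinning down the two group-theoretic identifications — especially the fact that MAGMA's canonical ordering of generators (translated from the theoretical ordering of \cite{Ma3}) places the norm image of $L_i$ at precisely the $i$-th label, so that the numerical match ``$\kappa_i=i$'' really is Taussky's condition (A). Once this bookkeeping is fixed, all three parts reduce to a finite inspection of Remark \ref{rmk:Ord243Id6Var}.
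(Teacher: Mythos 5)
Your argument is correct, and all three items do follow by the finite inspection you describe; but note that it is organized differently from the paper's own proof. The paper proves Theorem \ref{thm:Ord243Id6} and Corollary \ref{cor:Ord243Id6} simultaneously by citing the explicit first-layer TKT/TTT data in \cite[\S\ 4.5, Tbl.4.7, p.441]{Ma3} for the canonical generators and then permuting the components to MAGMA's ordering \(\varkappa_1(G)=(\ast,1,2,2)\), \(\tau_1(G)=\lbrack A(3,c),1^3,(21)^2\rbrack\); the capitulation/norm/Taussky bookkeeping is left implicit. You instead take the MAGMA-ordered \(\varkappa\)-values listed in Remark \ref{rmk:Ord243Id6Var} as input and make the dictionary explicit: the norm class group of \(L_i\) is the order-\(3\) subgroup labelled \(i\), the capitulation kernel is the subgroup labelled \(\varkappa_i\) (or all of \(\mathrm{Cl}_3(K)\) if \(\varkappa_i=0\)), so resistance of the class labelled \(j\) means \(j\) is absent from the value set of \(\varkappa\), and Taussky's condition (A) is exactly \(\varkappa_i\in\lbrace 0,i\rbrace\) because two distinct order-\(3\) subgroups of a \((3,3)\)-group meet trivially. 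This buys a self-contained and transparent verification of items (1)--(3), and it correctly isolates the only delicate point, namely that the componentwise correspondence of \(\varkappa_1\) and \(\tau_1\) in MAGMA's ordering is what makes ``\(\varkappa_i=i\)'' mean condition (A) and makes labels \(3,4\) the \((21)\)-classes. The one caveat is logical placement rather than correctness: Remark \ref{rmk:Ord243Id6Var} is itself established in the paper only by the same translation from \cite[Tbl.4.7]{Ma3} (together with the TKT nomenclature of \cite{Ma2}), so your route shifts, but does not remove, the dependence on that table; as long as the remark is accepted as proved input, there is no circularity, and your cycle/fixed-point counts for E.6, E.14, H.4 and the (A)/(B) assignments including the total-capitulation case c.18 all check out.
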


\begin{proof}
(of Theorem
\ref{thm:Ord243Id6}
and Corollary
\ref{cor:Ord243Id6})\\
Observe that in
\cite{Ma3},
the index of nilpotency \(m=c+1\) and the invariant \(e=r+1\)
are used rather than the nilpotency class \(c=m-1\) and the coclass \(r=e-1\).
The claims are a consequence of
\cite[\S\ 4.5, Tbl.4.7, p.441]{Ma3},
when we perform a permutation from the first layer TKT and TTT
\[\varkappa_1(G)=(\ast,3,1,3),\ \tau_1(G)=\lbrack A(3,c),21,1^3,21\rbrack,\]
with respect to the canonical generators, to the corresponding invariants
\[\varkappa_1(G)=(\ast,1,2,2),\ \tau_1(G)=\lbrack A(3,c),1^3,(21)^2\rbrack,\]
with respect to MAGMA's generators.
\end{proof}



\subsection{The coclass tree \(\mathcal{T}^2(\langle 243,8\rangle)\)}
\label{ss:Ord243Id8}

\begin{remark}
\label{rmk:Ord243Id8}
The first layer TTT and TKT of vertices of depth at most \(1\)
of the coclass tree \(\mathcal{T}^2(\langle 243,8\rangle)\)
\cite[Fig.3.7, p.443]{Ma4}
consist of four components each, and
share the following common properties
with respect to MAGMA's choice of generators:

\begin{enumerate}
\item
polarization (dependence on the class \(c\)) at the second component,
\item
stabilization (independence of the class \(c\)) at the other three components,
\item
rank \(3\) does not occur at any TTT component (\(\varepsilon=0\) in
\cite{Ma3}).
\end{enumerate}

Using the class \(c\), resp. an asterisk, as wildcard characters,
the common properties can be summarized as follows,
now including details of the stabilization:

\begin{equation}
\label{eqn:Ord243Id8}
\tau_1(G)=\lbrack 21,A(3,c),(21)^2\rbrack,\text{ and } \varkappa_1(G)=(2,\ast,3,4).
\end{equation}

\end{remark}

Again, we have to ensure the general applicability of the following theorem and corollary,
which must be independent of the choice of generators
(and thus invariant under permutations).

\begin{theorem}
\label{thm:Ord243Id8}
(in field theoretic terminology)
\begin{enumerate}
\item
Two extensions of type \((21)\) reveal fixed point principalization
satisfying condition (A)
\cite{Ta}.
\item
The remaining extension of type \((21)\) satisfies condition (B),
since the class associated with the polarization becomes principal there.
\end{enumerate}
\end{theorem}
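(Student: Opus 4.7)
The strategy is to mirror the proof of Theorem \ref{thm:Ord243Id6} and Corollary \ref{cor:Ord243Id6}: extract the canonical presentation of $\varkappa_1(G)$ and $\tau_1(G)$ for metabelian vertices of $\mathcal{T}^2(\langle 243,8\rangle)$ from \cite{Ma3}, apply the appropriate permutation of maximal subgroups to match MAGMA's convention, and read off the field-theoretic conclusions from the resulting fixed-point pattern of the TKT. Throughout, the conversion $m = c+1$, $e = r+1 = 3$ bridges the notation of \cite{Ma3} and the present paper.

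The first step is to invoke \cite[\S\ 4.6, Thm.4.5, p.444]{Ma3}, which describes the tree rooted at $\langle 243,8\rangle$ with $\varepsilon = 0$ (no rank-$3$ TTT component). The cited result yields a canonical first-layer TKT and TTT in which the polarized component and the three stabilized components occupy a definite prescribed ordering. I would then determine the unique permutation $\sigma \in S_4$ carrying this canonical ordering to MAGMA's ordering displayed in \eqref{eqn:Ord243Id8}, and apply $\sigma$ simultaneously to both the TKT entries and the TTT components. After the relabelling, position $2$ is polarized and positions $1$, $3$, $4$ are the three extensions of type $(21)$, with $\varkappa_1(G) = (2,\ast,3,4)$.

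The conclusion follows by reading off this permuted TKT: entries $\varkappa_1(G)_3 = 3$ and $\varkappa_1(G)_4 = 4$ are fixed points, so in each of $L_3$ and $L_4$ the class corresponding under Artin reciprocity to a generator of $\mathrm{Gal}(L_i\mid K)$ capitulates, which is precisely Taussky's condition (A); meanwhile $\varkappa_1(G)_1 = 2$ sends the first component to the polarized class, so the class capitulating in $L_1$ is not the Galois-distinguished class of $L_1$ but the polarized one, yielding Taussky's condition (B). The main technical obstacle is the careful bookkeeping of the permutation $\sigma$, since any misalignment would corrupt the fixed-point pattern; the subsequent translation between group-theoretic fixed points and the arithmetic conditions (A) and (B) of Taussky \cite{Ta} is standard in this setting and is already handled implicitly in the tables of \cite{Ma3}.
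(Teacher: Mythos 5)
Your proposal follows essentially the same route as the paper's proof: it extracts the canonical first-layer TKT and TTT of the vertices of $\mathcal{T}^2(\langle 243,8\rangle)$ from \cite{Ma3}, transforms them to MAGMA's ordering $\varkappa_1(G)=(2,\ast,3,4)$, $\tau_1(G)=\lbrack 21,A(3,c),(21)^2\rbrack$, and reads off the two fixed points (Taussky's condition (A)) and the single non-fixed image landing at the polarized position (condition (B)). The only correction needed is the pinpoint reference: since this tree has coclass $2$ (i.e.\ $e=r+1=3$), the relevant data are \cite[\S\ 4.5, Tbl.4.7, p.441]{Ma3}, from which the paper permutes the canonical $\varkappa_1(G)=(\ast,2,3,1)$, $\tau_1(G)=\lbrack A(3,c),(21)^3\rbrack$, whereas \cite[\S\ 4.6, Thm.4.5, p.444]{Ma3}, which you cite, concerns coclass $r\ge 3$.
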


\begin{remark}
\label{rmk:Ord243Id8Var}
Next, we come to variations caused by the polarization,
which we now express with respect to MAGMA's choice of generators:

\begin{enumerate}
\item
The TKT is E.8, \(\varkappa_1(G)=(2,2,3,4)\), if and only if
the polarized extension reveals a \textit{fixed point} principalization.
\item
The TKT is E.9, \(\varkappa_1(G)\in\lbrace (2,3,3,4),(2,4,3,4)\rbrace\), if and only if
one of the classes associated with fixed points becomes principal in the polarized extension.
\item
The TKT is G.16, \(\varkappa_1(G)=(2,1,3,4)\), if and only if
the class associated with type \((21)\), satisfying condition (B), becomes principal in the polarized extension.
\item
The TKT is c.21, \(\varkappa_1(G)=(2,0,3,4)\), if and only if
the polarized extension reveals a \textit{total} principalization (indicated by \(0\)).
\end{enumerate}

\end{remark}

\begin{corollary}
\label{cor:Ord243Id8}
(in field theoretic terminology)
\begin{enumerate}
\item
For the TKTs E.8 and E.9, the class associated with the polarization remains resistant,
\item
The polarized extension satisfies condition (B)
\cite{Ta}
in the case of TKT E.9 or G.16,
and it satisfies condition (A) in the case of TKT E.8 or c.21.
\item
TKT G.16 is a permutation containing a \(2\)-cycle, and
TKT E.8 is the unique TKT possessing three fixed points.
\end{enumerate}
\end{corollary}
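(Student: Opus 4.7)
The plan is to establish all three assertions of Corollary \ref{cor:Ord243Id8} by a direct case analysis of the four admissible TKT vectors enumerated in Remark \ref{rmk:Ord243Id8Var}, combined with the fixed-point pattern at the three non-polarized slots of $\varkappa_1(G)$ already established in Theorem \ref{thm:Ord243Id8}. The underlying principle is that resistance, Taussky's (A)/(B) dichotomy, and the cycle structure of the TKT are all intrinsic to the map $\varkappa_1(G)$ up to relabeling of the four subgroups of index $3$ in $\mathrm{Cl}_3(K) \cong (3,3)$, hence can be read off from the MAGMA form of the TKT vector.

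For the setup, I would fix the MAGMA numbering of the four unramified cyclic cubic extensions $L_1, L_2, L_3, L_4$ of $K$, so that the ordered TKT has the common shape $\varkappa_1(G) = (2, \ast, 3, 4)$ and the polarization occupies the second slot, the values $\ast = 2, \ast \in \{3,4\}, \ast = 1, \ast = 0$ corresponding respectively to E.8, E.9, G.16, c.21. I would then recall three standard translations: a class label $j \in \{1,2,3,4\}$ of $\mathrm{Cl}_3(K)$ gives a resistant class precisely when $j$ does not occur among the entries of $\varkappa_1(G)$, with the proviso that an entry $0$ at some slot signals total capitulation and renders every class non-resistant in that extension; Taussky's condition (A) holds at slot $i$ exactly when $\varkappa_1(G)(i) \in \{0, i\}$, and condition (B) holds exactly when $\varkappa_1(G)(i) \notin \{0, i\}$; the cycle structure is read off directly from the TKT vector.

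With these dictionaries in place, each of the three assertions reduces to inspecting the four admissible vectors. For (1), the class attached to the polarization is absent from the TKT vectors of E.8 and E.9, hence does not capitulate in any $L_i$, whereas it is either present or annihilated by the total-capitulation entry in G.16 and c.21. For (2), at the polarized slot $i = 2$ one checks that $\varkappa_1(G)(2) \in \{0, 2\}$ exactly for E.8 and c.21 (giving condition (A)), while $\varkappa_1(G)(2) \notin \{0, 2\}$ for E.9 and G.16 (giving condition (B)). For (3), the vector $(2, 1, 3, 4)$ of G.16 is the transposition swapping $1$ and $2$ while fixing $3$ and $4$, so it contains the $2$-cycle claimed; the vector $(2, 2, 3, 4)$ of E.8 has fixed points precisely at positions $2, 3, 4$, and comparison with E.9, G.16, c.21 shows that no other of the four TKT types admits three fixed points.

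The main obstacle, and indeed the only substantive step, is the preparatory translation from the canonical generators used throughout \cite{Ma3} to MAGMA's generators used in the statement. This translation is a permutation in $S_4$ that conjugates the canonical TKT into the form $(2, \ast, 3, 4)$, exactly parallel to the permutation carried out in the proof of Theorem \ref{thm:Ord243Id8} (and to the one in the proof of Corollary \ref{cor:Ord243Id6}). Once this permutation is identified, one checks that resistance, the (A)/(B) dichotomy, and the cycle type are all invariants under such relabelings, after which the case-by-case inspection above completes the proof.
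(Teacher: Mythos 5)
Your route is essentially the paper's: the paper proves Theorem \ref{thm:Ord243Id8} and Corollary \ref{cor:Ord243Id8} together by citing the classification in \cite[\S\ 4.5, Tbl.4.7, p.441]{Ma3} and performing the relabeling from the canonical data \(\varkappa_1(G)=(\ast,2,3,1)\), \(\tau_1(G)=\lbrack A(3,c),(21)^3\rbrack\) to the MAGMA form \(\varkappa_1(G)=(2,\ast,3,4)\), \(\tau_1(G)=\lbrack 21,A(3,c),(21)^2\rbrack\); your dictionaries for resistance, Taussky's (A)/(B) dichotomy and cycle structure, together with the inspection of the four admissible vectors, merely make that reading-off explicit. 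Your checks of items (2) and (3) are correct (for (3) you only compare within the four admissible types, whereas the uniqueness claim is meant among all TKTs, but this is a minor point of scope).

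The genuine problem is your verification of item (1). Under your own dictionary, the class associated with the polarization carries the label \(2\) (the polarized slot), and \(2\) is \emph{not} absent from the TKT vectors of E.8 and E.9: the common shape \((2,\ast,3,4)\) forces \(\varkappa_1(G)(1)=2\) in every case, so this class becomes principal in \(L_1\) --- precisely the content of Theorem \ref{thm:Ord243Id8}, item (2) --- and is therefore never resistant in your sense. The label that is missing from the vectors exactly for E.8 and E.9, and only then, is \(1\), i.e.\ the class attached to the stable first component of type \((21)\) (the extension satisfying condition (B)); for G.16 it capitulates in the polarized extension (\(\varkappa_1(G)(2)=1\)) and for c.21 everything capitulates there. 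So either you must re-identify the relevant class in item (1), or you should have flagged that the Corollary's phrase \lq\lq class associated with the polarization\rq\rq\ cannot be read as the norm class of the polarized extension without contradicting Theorem \ref{thm:Ord243Id8}(2). As written, your case analysis for item (1) rests on a false assertion about the vectors, so that part of the proof fails.
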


\begin{proof}
(of Theorem
\ref{thm:Ord243Id8}
and Corollary
\ref{cor:Ord243Id8})\\
In our paper
\cite{Ma3},
the index of nilpotency \(m=c+1\) and the invariant \(e=r+1\)
are used rather than the nilpotency class \(c=m-1\) and the coclass \(r=e-1\).
All claims are a consequence of
\cite[\S\ 4.5, Tbl.4.7, p.441]{Ma3},
provided we perform a transformation from the first layer TKT and TTT
\[\varkappa_1(G)=(\ast,2,3,1),\ \tau_1(G)=\lbrack A(3,c),(21)^3\rbrack,\]
with respect to the canonical generators, to the corresponding invariants
\[\varkappa_1(G)=(2,\ast,3,4),\ \tau_1(G)=\lbrack 21,A(3,c),(21)^2\rbrack,\]
with respect to MAGMA's generators.
\end{proof}



\section{Applications in extreme computing}
\label{s:ExtremeComputing}



\subsection{Application 1: Sifting malformed IPADs}
\label{ss:Sifting}
 
\begin{definition}
\label{dfn:Malformed}
An IPAD with bottom layer component
\(\tau_0(K)=(3,3)\)
is called \textit{malformed}
if it is not covered by Theorems
\ref{thm:Sporadic3x3}
and
\ref{thm:Sequences3x3}.
\end{definition}

To verify predicted asymptotic densities of maximal unramified pro-\(3\) extensions
in the article
\cite{BBH}
numerically,
the IPADs of all complex quadratic fields \(K=\mathbb{Q}(\sqrt{d})\)
with discriminants \(-10^8<d<0\) and \(3\)-class rank \(r_3(K)=2\) were computed
with the aid of PARI/GP
\cite{PARI}.
In particular, there occurred \(276\,375\), resp. \(122\,444\), such fields
with \(3\)-class group \(\mathrm{Cl}_3(K)\) of type \((3,3)\), resp. \((9,3)\).



\begin{example}
\label{exm:Malformed3x3}

A check of all \(276\,375\) IPADs for complex quadratic fields with type \((3,3)\)
in the range \(-10^8<d<0\) of discriminants,
for which Theorem
\ref{thm:Sequences3x3}
states that
the \(3\)-class groups of the \(4\) unramified cyclic cubic extensions
can only have \(3\)-rank \(2\), except for the unique type \((3,3,3)\),
revealed that the following \(5\) IPADs were computed erroneously
by the used version of PARI/GP
\cite{PARI}
in
\cite{BBH}.
The successful recomputation was done with MAGMA
\cite{MAGMA}.

\begin{enumerate}

\item
For \(d=-96\,174\,803\), the erroneous IPAD
\(\tau^{(1)}(K)=\lbrack (3,3);(3,3,3),(9,3,3,3),(27,9)^2\rbrack\)
contained the malformed component \((9,3,3,3)\) instead of the correct \((3,3,3)\).
The transfer kernel type (TKT)
\cite{Ma2,Ma4}
turned out to be F.12.

\item
For \(d=-77\,254\,244\), the erroneous IPAD
\(\tau^{(1)}(K)=\lbrack (3,3);(3,3,3)^2,(3,3,3,3),(9,3)\rbrack\)
contained the malformed component \((3,3,3,3)\) instead of the correct \((3,3,3)\).
Its TKT is H.4.

\item
For \(d=-73\,847\,683\), the erroneous IPAD
\(\tau^{(1)}(K)=\lbrack (3,3);(3,3,3),(9,3,3),(9,3)^2\rbrack\)
contained the malformed component \((9,3,3)\) instead of the correct \((9,3)\).
The TKT is D.10.

\item
For \(d=-81\,412\,223\), the erroneous IPAD
\(\tau^{(1)}(K)=\lbrack (3,3);(9,3,3),(9,3)^2,(27,9)\rbrack\)
contained the malformed component \((9,3,3)\) instead of the correct \((9,3)\).
This could be a TKT E.8 or E.9 or G.16.

\item
For \(d=-82\,300\,871\), the erroneous IPAD
\(\tau^{(1)}(K)=\lbrack (3,3);(3,3,3),(9,3),(9,9,3),(27,9)\rbrack\)
contained the malformed component \((9,9,3)\) instead of the correct \((9,3)\).
This could be a TKT E.6 or E.14 or H.4.

\end{enumerate}

For the last two cases, Magma failed to determine the TKT.
Nevertheless, none of the discriminants
\[d\in\lbrace -73\,847\,683, -77\,254\,244, -81\,412\,223, -82\,300\,871, -96\,174\,803\rbrace\]
is particularly spectacular.
 
\end{example}



\begin{example}
\label{exm:Malformed9x3}

We also checked all \(122\,444\) IPADs for complex quadratic fields with type \((9,3)\)
in the range \(-10^8<d<0\) of discriminants,
Again, we found exactly \(5\) errors
among these IPADs which had been computed by PARI/GP
\cite{PARI}
in
\cite{BBH}.
For the recomputation we used MAGMA
\cite{MAGMA}.
The study of this extensive material
was very helpful for the deeper understanding
of \(3\)-groups having abelianization of type \((9,3)\).
Systematic results in the style of Theorems
\ref{thm:Sporadic3x3}
and
\ref{thm:Sequences3x3}
will be given in a forthcoming paper.
The abbreviation pTKT means the \textit{punctured} TKT.

\begin{enumerate}

\item
For \(d=-94\,304\,231\), the erroneous IPAD
\(\tau^{(1)}(K)=\lbrack (9,3);(9,3,3),(27,3),(9,9,9),(27,9)\rbrack\)
contained the malformed component \((27,9)\) instead of the correct \((27,3)\).
This could be a homocyclic pTKT B.2 or C.4 or D.5.
 
\item
For \(d=-79\,749\,087\), the erroneous IPAD
\(\tau^{(1)}(K)=\lbrack (9,3);(9,3,3)^2,(27,3,3),(27,3)\rbrack\)
contained the malformed component \((27,3,3)\) instead of the correct \((27,3)\).
It is a pTKT D.11.

\item 
For \(d=-74\,771\,240\), the erroneous IPAD
\(\tau^{(1)}(K)=\lbrack (9,3);(9,3,3),(27,3,3),(27,3),(9,9,9)\rbrack\)
contained the malformed component \((27,3,3)\) instead of the correct \((27,3)\).
It could be a homocyclic pTKT B.2 or C.4 or D.5.

\item 
For \(d=-70\,204\,919\), the erroneous IPAD
\(\tau^{(1)}(K)=\lbrack (9,3);(9,3,3),(27,3)^2,(81,27,27)\rbrack\)
contained the malformed component \((81,27,27)\) instead of the correct \((81,27,3)\).
This could be a pTKT B.2 or C.4 or D.5 in the first excited state.

\item 
For \(d=-86\,139\,199\), the erroneous IPAD
\(\tau^{(1)}(K)=\lbrack (9,3);(81,3,3,3),(9,3,3),(27,3)^2\rbrack\)
contained the malformed component \((81,3,3,3)\) instead of the correct \((9,3,3)\).
This is clearly a pTKT D.11.

\end{enumerate}

Again, none of the corresponding discriminants
\[d\in\lbrace -70\,204\,919, -74\,771\,240, -79\,749\,087, -86\,139\,199, -94\,304\,231\rbrace\]
is particularly spectacular.

\end{example}

We emphasize that,
in both Examples
\ref{exm:Malformed3x3}
and
\ref{exm:Malformed9x3},
the errors of PARI/GP
\cite{PARI}
occured in the upper limit range
of absolute discriminants above \(70\) millions.
This seems to be a critical region of extreme computing where
current computational algebra systems become unstable. 
MAGMA
\cite{MAGMA}
also often fails to compute the TKT in that range.

Fortunately, there appeared a single discriminant only
for each of the \(5\) erroneous IPADs,
in both examples.
This indicates that the errors are not systematic
but rather stochastic.



\subsection{Application 2: Completing partial capitulation types}
\label{ss:Completion}

\begin{example}
\label{exm:TKTH4}
For the discriminant \(d=-3\,849\,267\) of
the complex quadratic field \(K=\mathbb{Q}(\sqrt{d})\)
with \(3\)-class group of type \((3,3)\),
we constructed the four unramified cyclic cubic extensions \(L_i\vert K\), \(1\le i\le 4\),
and computed the IPAD \(\tau^{(1)}(K)=\lbrack 1^2;(54,21,1^3,21)\rbrack\)
with the aid of MAGMA
\cite{MAGMA}.

According to Theorem
\ref{thm:Sequences3x3},
the second \(3\)-class group \(G\) of \(K\) must be of coclass \(\mathrm{cc}(G)=2\),
and the polarized component \(54\) of the IPAD shows that \(c-k=5+4=9\) and thus
the nilpotency class \(c=\mathrm{cl}(G)\) and the defect of commutativity \(k\)
are given by either \(c=9\), \(k=0\), or \(c=10\), \(k=1\).
Further, in view of the rank-\(3\) component \(1^3\) of the IPAD,
\(G\) must be a vertex of the coclass tree \(\mathcal{T}^2(\langle 729,49\rangle)\).

When we tried to determine the \(3\)-principalization type \(\varkappa:=\varkappa_1(3,K)\),
MAGMA succeeded in calculating \(\varkappa(1)=3\) and \(\varkappa(2)=3\)
but unfortunately failed to give \(\varkappa(3)\) and \(\varkappa(4)\).
With respect to the complete IPAD, Theorem
\ref{thm:Ord243Id6}
enforces \(\varkappa(3)=1\) (item (1)) and \(\varkappa(4)=3\) (item (2)),
and therefore the partial result \(\varkappa=(3,3,\ast,\ast)\)
is completed to \(\varkappa=(3,3,1,3)\).
According to item (3) of Remark
\ref{rmk:Ord243Id6Var}
or item (3) of Corollary
\ref{cor:Ord243Id6},
\(K\) is of TKT H.4.
Our experience suggests that this TKT compels the arrangement \(c=10\), \(k=1\),
expressed by the \textit{weak leaf conjecture}
\cite[Cnj.3.1, p.423]{Ma4}.
\end{example}

\begin{example}
\label{exm:TKTE8}
For the discriminant \(d=-4\,928\,155\) of
the complex quadratic field \(K=\mathbb{Q}(\sqrt{d})\)
with \(3\)-class group of type \((3,3)\),
we constructed the four unramified cyclic cubic extensions \(L_i\vert K\), \(1\le i\le 4\),
and computed the IPAD \(\tau^{(1)}(K)=\lbrack 1^2;(21,54,(21)^2)\rbrack\)
with the aid of MAGMA
\cite{MAGMA}.

According to Theorem
\ref{thm:Sequences3x3},
the second \(3\)-class group \(G\) of \(K\) must be of coclass \(\mathrm{cc}(G)=2\),
and the polarized component \(54\) of the IPAD shows that \(c-k=5+4=9\) and thus
the nilpotency class \(c=\mathrm{cl}(G)\) and the defect of commutativity \(k\)
are given by either \(c=9\), \(k=0\), or \(c=10\), \(k=1\).
Further, due to the lack of a rank-\(3\) component \(1^3\) in the IPAD,
\(G\) must be a vertex of the coclass tree \(\mathcal{T}^2(\langle 729,54\rangle)\).

Next, we tried to determine the \(3\)-principalization type \(\varkappa:=\varkappa_1(3,K)\).
MAGMA succeeded in calculating two fixed points \(\varkappa(1)=1\) and \(\varkappa(2)=2\)
but unfortunately failed to give \(\varkappa(3)\) and \(\varkappa(4)\).
With respect to the complete IPAD, Theorem
\ref{thm:Ord243Id8}
enforces \(\varkappa(3)=3\) or \(\varkappa(4)=4\) (item (1)),
and \(\varkappa(4)=2\) or \(\varkappa(3)=2\) (item (2)),
and therefore the partial result \(\varkappa=(1,2,\ast,\ast)\)
is completed to \(\varkappa=(1,2,3,2)\) or \(\varkappa=(1,2,2,4)\).
According to item (1) of Remark
\ref{rmk:Ord243Id8Var}
or item (3) of Corollary
\ref{cor:Ord243Id8},
\(K\) is of TKT E.8,
and this TKT enforces the arrangement \(c=9\), \(k=0\),
since \(k=1\) is impossible.
\end{example}

\begin{example}
\label{exm:TKTE14}
For the discriminant \(d=-65\,433\,643\) of
the complex quadratic field \(K=\mathbb{Q}(\sqrt{d})\)
with \(3\)-class group of type \((3,3)\),
we constructed the four unramified cyclic cubic extensions \(L_i\vert K\), \(1\le i\le 4\),
and computed the IPAD \(\tau^{(1)}(K)=\lbrack 1^2;(65,1^3,(21)^2)\rbrack\)
with the aid of MAGMA
\cite{MAGMA}.

According to Theorem
\ref{thm:Sequences3x3},
the second \(3\)-class group \(G\) of \(K\) must be of coclass \(\mathrm{cc}(G)=2\),
and the polarized component \(65\) of the IPAD shows that \(c-k=6+5=11\) and thus
the nilpotency class \(c=\mathrm{cl}(G)\) and the defect of commutativity \(k\)
are given by either \(c=11\), \(k=0\), or \(c=12\), \(k=1\).
Further, in view of the rank-\(3\) component \(1^3\) of the IPAD,
\(G\) must be a vertex of the coclass tree \(\mathcal{T}^2(\langle 729,49\rangle)\).

Then we tried to determine the \(3\)-principalization type \(\varkappa:=\varkappa_1(3,K)\).
MAGMA succeeded in calculating \(\varkappa(1)=4\) and \(\varkappa(2)=1\)
but unfortunately failed to give \(\varkappa(3)\) and \(\varkappa(4)\).
With respect to the complete IPAD, Theorem
\ref{thm:Ord243Id6}
enforces \(\varkappa(3)=2\) and \(\varkappa(4)=2\) (item (2)),
whereas the claim in item (1) is confirmed,
and therefore the partial result \(\varkappa=(4,1,\ast,\ast)\)
is completed to \(\varkappa=(4,1,2,2)\).
According to item (2) of Remark
\ref{rmk:Ord243Id6Var}
or item (3) of Corollary
\ref{cor:Ord243Id6},
\(K\) is of TKT E.14,
and this TKT enforces the arrangement \(c=11\), \(k=0\),
since \(k=1\) is impossible.
\end{example}



\section{Iterated IPADs of second order}
\label{s:IPAD2ndOrd}




\subsection{\(p\)-capitulation type}
\label{ss:Capitulation}

By means of the following theorem,
the exact \(3\)-principalization type \(\varkappa\) of
real quadratic fields \(K=\mathbb{Q}(\sqrt{d})\), \(d>0\),
can be determined indirectly
with the aid of information on the structure of \(3\)-class groups
of number fields of absolute degree \(6\cdot 3=18\).

\begin{theorem}
\label{thm:IndirectCapitulationType}
(Indirect computation of the \(p\)-capitulation type)\\
Suppose that \(p=3\) and
let \(K\) be a number field with \(3\)-class group \(\mathrm{Cl}_3(K)\) of type \((3,3)\)
and \(3\)-tower group \(G\).

\begin{enumerate}
\item
If the IPAD of \(K\) is given by
\[\tau^{(1)}(K)=\lbrack 1^2;(21;(1^2)^3)\rbrack,\]
then
\[G^{\prime\prime}=1,\ G\simeq G/G^{\prime\prime},\ \mathrm{cc}(G)=1,
\text{ and }G\in\lbrace\langle 81,8\rangle,\langle 81,9\rangle,\langle 81,10\rangle\rbrace,\]
in particular, the length of the \(3\)-class tower of \(K\) is given by \(\ell_3(K)=2\).
\item
If the first layer \(\mathrm{Lyr}_1(K)\) of abelian unramified extensions of \(K\)
consists of \(L_1,\ldots,L_4\), then the iterated IPAD of second order
\[\tau^{(2)}(K)=\lbrack\tau_0(K);(\tau_0(L_i);\tau_1(L_i))_{1\le i\le 4}\rbrack,\text{ with }\tau_0(K)=1^2,\]
admits a sharp decision about the group \(G\) and the first layer of the
transfer kernel type
\[\varkappa(K)=\lbrack\varkappa_0(K);\varkappa_1(K);\varkappa_2(K);
\rbrack\text{ where trivially }\varkappa_0(K)=1,\  \varkappa_2(K)=0.\]

\begin{equation}
\label{eqn:SecondIPAD81Id10}
\begin{aligned}
\lbrack \tau_0(L_1);\tau_1(L_1)\rbrack &= \lbrack 21;(1^2,(2)^3)\rbrack, \\
\lbrack \tau_0(L_i);\tau_1(L_i)\rbrack &= \lbrack 1^2;(1^2,\mathbf{(2)^3})\rbrack,\text{ for }2\le i\le 4,
\end{aligned}
\end{equation}

\noindent
implies \(G\simeq\langle 81,10\rangle\) and thus \(\varkappa_1(K)=(1,0,0,0)\),

\begin{equation}
\label{eqn:SecondIPAD81Id8}
\begin{aligned}
\lbrack \tau_0(L_1);\tau_1(L_1)\rbrack &= \lbrack 21;(1^2,(2)^3)\rbrack, \\
\lbrack \tau_0(L_2);\tau_1(L_2)\rbrack &= \lbrack 1^2;(1^2)^4\rbrack, \\
\lbrack \tau_0(L_i);\tau_1(L_i)\rbrack &= \lbrack 1^2;(1^2,\mathbf{(2)^3})\rbrack,\text{ for }3\le i\le 4,
\end{aligned}
\end{equation}

\noindent
implies \(G\simeq\langle 81,8\rangle\) and thus \(\varkappa_1(K)=(2,0,0,0)\), and

\begin{equation}
\label{eqn:SecondIPAD81Id9}
\begin{aligned}
\lbrack \tau_0(L_1);\tau_1(L_1)\rbrack &= \lbrack 21;(1^2,(2)^3)\rbrack, \\
\lbrack \tau_0(L_i);\tau_1(L_i)\rbrack &= \lbrack 1^2;(1^2)^4\rbrack,\text{ for }2\le i\le 4,
\end{aligned}
\end{equation}

\noindent
implies \(G\simeq\langle 81,9\rangle\) and thus \(\varkappa_1(K)=(0,0,0,0)\).

\end{enumerate}

\end{theorem}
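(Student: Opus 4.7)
My plan is to treat the two parts separately, each reducing to a small finite-group verification on top of Theorem \ref{thm:Sporadic3x3}.

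For part (1), I would match the hypothesized IPAD $\tau^{(1)}(K) = \lbrack 1^2;(21;(1^2)^3)\rbrack$ directly against Theorem \ref{thm:Sporadic3x3}, item (1), Equation \eqref{eqn:Cc1TTTLoCl}. It agrees precisely with the case $c=3$, which also asserts $G^{\prime\prime}=1$. Hence $G\simeq G/G^{\prime\prime}$ is one of $\langle 81,8\vert 9\vert 10\rangle$, of coclass $1$. Since $G$ is then finite, metabelian and non-abelian, the $3$-class tower length is $\ell_3(K)=2$.

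For part (2), the plan is to read off the isomorphism types of the maximal subgroups of $G$ from the second-order IPAD. The four maximal subgroups $H_1,\ldots,H_4$ of $G$ correspond by Artin reciprocity to the unramified cyclic cubic extensions $L_1,\ldots,L_4$ of $K$. The first-order layer $\tau_1(K)=(21,(1^2)^3)$ already forces $H_1^{\mathrm{ab}}\simeq(9,3)$ and $H_i^{\mathrm{ab}}\simeq(3,3)$ for $i\ge 2$. Since $|H_i|=27$, the polarized $H_1$ must be abelian of type $(9,3)$, while each $H_i$ for $i\ge 2$ is non-abelian of order $27$ and is therefore either the Heisenberg group $\mathrm{He}_3\simeq\langle 27,3\rangle$ or the modular maximal-cyclic group $\mathrm{M}_{27}\simeq\langle 27,4\rangle$.

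A short inspection of the index-$3$ subgroup lattice, which I would leave as a routine verification, shows that $\tau_1(L_i)=(1^2)^4$ when $H_i\simeq\mathrm{He}_3$, that $\tau_1(L_i)=(1^2,(2)^3)$ when $H_i\simeq\mathrm{M}_{27}$, and that the abelian polarized case always yields $\tau_1(L_1)=(1^2,(2)^3)$. Thus the second-order IPAD records, for each $i\ge 2$, whether $H_i$ is Heisenberg or modular. The three candidate groups differ precisely in the number of their three non-polarized maximal subgroups that are isomorphic to $\mathrm{M}_{27}$: zero for $\langle 81,9\rangle$, two for $\langle 81,8\rangle$, and three for $\langle 81,10\rangle$. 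Matching these counts against Equations \eqref{eqn:SecondIPAD81Id10}--\eqref{eqn:SecondIPAD81Id9} isolates $G$ uniquely, and the first-layer TKT $\varkappa_1(K)$ is then read off as the transfer kernel of the identified finite $3$-group.

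The main obstacle I anticipate is verifying the exact multiset of isomorphism types of maximal subgroups for each of $\langle 81,8\vert 9\vert 10\rangle$: a routine but delicate finite-group computation, most safely carried out either from explicit presentations (tracking exponent and commutator structure) or by appeal to the SmallGroups database. Any miscount of Heisenberg versus modular maximal subgroups would propagate directly into a wrong identification of $G$, and hence of $\varkappa_1(K)$.
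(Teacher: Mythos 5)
Your overall strategy — reduce part (1) to the enumeration of IPADs in Theorems \ref{thm:Sporadic3x3} and \ref{thm:Sequences3x3}, and prove part (2) by deciding, from the second-order data, which of the three non-polarized maximal subgroups of order \(27\) are Heisenberg and which are modular — is the natural one; the paper states this theorem without an explicit proof, and your part (2) verifications (the abelianizations of the index-\(3\) subgroups of \(C_9\times C_3\), \(\langle 27,3\rangle\), \(\langle 27,4\rangle\), and the counts \(0,2,3\) of modular maximal subgroups for \(\langle 81,9\rangle\), \(\langle 81,8\rangle\), \(\langle 81,10\rangle\)) do agree with Equations (\ref{eqn:SecondIPAD81Id9}), (\ref{eqn:SecondIPAD81Id8}), (\ref{eqn:SecondIPAD81Id10}), and the claim \(G''=1\) is legitimately taken from Theorem \ref{thm:Sporadic3x3}.

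However, part (1) contains a genuine gap. Matching the IPAD against the row \(c=3\) of Theorem \ref{thm:Sporadic3x3} is not by itself conclusive, because Theorems \ref{thm:Sporadic3x3} and \ref{thm:Sequences3x3} go from groups to IPADs; to use them in the converse direction you must verify that no other case of either theorem yields the same IPAD. At least formally, one other case does: in Theorem \ref{thm:Sequences3x3}, item (1), a pro-\(3\) group whose metabelianization has coclass \(1\), class \(c=4\) and defect \(k=1\) would have first layer \((A(3,c-k),(1^2)^3)=(21,(1^2)^3)\) and top layer \(1^2\), i.e.\ exactly the hypothesized IPAD; the two cases differ only in the bottom layer \(\tau_2\) (namely \(1^2\) versus \(A(3,3)=21\)), which the IPAD \(\tau^{(1)}\) does not record. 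So your argument needs an additional step ruling out coclass-\(1\) groups with \(c\ge 4\) and \(c-k=3\) — in effect, a proof that the defect of commutativity cannot contribute at class \(4\), or some other reason why such a group cannot occur as the \(3\)-tower group of \(K\). This exclusion is not contained in the two theorems you cite, and it is precisely the point where the justification must appeal to the finer classification of \(3\)-groups of maximal class (or to explicit inspection of the coclass-\(1\) groups of order \(3^5\)); your proposal neither notices nor addresses it. Since part (2) identifies \(G\) among the three groups of order \(81\) only after part (1) has been secured, the gap propagates to part (2) as well, even though the counting argument you give there is correct once part (1) is in place.
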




\begin{example}
\label{exm:IndirectCapitulationType}
A possible future application of Theorem
\ref{thm:IndirectCapitulationType}
could, for instance, be the separation of the capitulation types 
a.2, \(\varkappa_1(K)=(1,0,0,0)\), and
a.3, \(\varkappa_1(K)=(2,0,0,0)\),
among the \(1\,386\) 
real quadratic fields \(K=\mathbb{Q}(\sqrt{d})\), \(0<d<10^7\),
with \(3\)-class group \(\mathrm{Cl}_3(K)\) of type \((3,3)\)
and IPAD \(\tau^{(1)}(K)=\lbrack 1^2;(21;(1^2)^3)\rbrack\),
which was outside of our reach in all investigations of
\cite[Tbl.2, p.496]{Ma1},
\cite[Tbl.6.1, p.451]{Ma3}
and
\cite[Fig.3.2, p.422]{Ma4}.
The reason why we expect this enterprise to be promising
is that our experience with Magma
\cite{MAGMA}
shows that computing class groups can become slow
but remains sound and stable for huge discriminants \(d\),
whereas the calculation of capitulation kernels frequently fails.
\end{example}



\subsection{Length of the \(p\)-class tower}
\label{ss:TowerLength}

In this section, we use the iterated IPAD of second order
\(\tau^{(2)}(K)=\lbrack\tau_0(K);(\tau_0(L);\tau_1(L))_{L\in\mathrm{Lyr}_1(K)}\rbrack\)
for the indirect computation of the length \(\ell_p(K)\) of the \(p\)-class tower
of a number field \(K\), where \(p\) denotes a fixed prime.

\begin{theorem}
\label{thm:TowerLengthE6AndE14}
(Length \(\ell_3(K)\) of the \(3\)-class tower for \(G/G^{\prime\prime}\in\mathcal{T}^2(\langle 243,6\rangle)\))\\
Suppose that \(p=3\) and
let \(K\) be a number field with \(3\)-class group \(\mathrm{Cl}_3(K)\) of type \((3,3)\)
and \(3\)-tower group \(G\).

\begin{enumerate}
\item
If the IPAD of \(K\) is given by
\[\tau^{(1)}(K)=\lbrack 1^2;(32;1^3,(21)^2)\rbrack,\]
and the first layer TKT \(\varkappa_1(K)\)
neither contains a total principalization nor a \(2\)-cycle,
then there are two possibilities \(\ell_3(K)\in\lbrace 2,3\rbrace\)
for the length of the \(3\)-class tower of \(K\).
\item
If the first layer \(\mathrm{Lyr}_1(K)\) of abelian unramified extensions of \(K\)
consists of \(L_1,\ldots,L_4\), then the iterated IPAD of second order
\[\tau^{(2)}(K)=\lbrack\tau_0(K);(\tau_0(L_i);\tau_1(L_i))_{1\le i\le 4}\rbrack,\text{ with }\tau_0(K)=1^2,\]
admits a sharp decision about the length \(\ell_3(K)\):

\begin{equation}
\label{eqn:SecondIPADE6E14Length2}
\begin{aligned}
\lbrack \tau_0(L_1);\tau_1(L_1)\rbrack &= \lbrack 32;(2^21,(31^2)^3)\rbrack, \\
\lbrack \tau_0(L_2);\tau_1(L_2)\rbrack &= \lbrack 1^3;(2^21,\mathbf{(1^3)^3},(1^2)^9)\rbrack, \\
\lbrack \tau_0(L_i);\tau_1(L_i)\rbrack &= \lbrack 21;(2^21,\mathbf{(21)^3})\rbrack,\text{ for }3\le i\le 4,
\end{aligned}
\end{equation}

\noindent
if and only if \(\ell_3(K)=2\), and

\begin{equation}
\label{eqn:SecondIPADE6E14Length3}
\begin{aligned}
\lbrack \tau_0(L_1);\tau_1(L_1)\rbrack &= \lbrack 32;(2^21,(31^2)^3)\rbrack, \\
\lbrack \tau_0(L_2);\tau_1(L_2)\rbrack &= \lbrack 1^3;(2^21,\mathbf{(21^2)^3},(1^2)^9)\rbrack, \\
\lbrack \tau_0(L_i);\tau_1(L_i)\rbrack &= \lbrack 21;(2^21,\mathbf{(31)^3})\rbrack,\text{ for }3\le i\le 4,
\end{aligned}
\end{equation}

\noindent
if and only if \(\ell_3(K)=3\).

\end{enumerate}
\end{theorem}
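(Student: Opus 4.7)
The plan is to combine Theorem \ref{thm:Sequences3x3} for the identification of the metabelianization $G/G^{\prime\prime}$ with a descendant enumeration that separates metabelian and non-metabelian candidates, then to compute the TTT of each index-$3$ subgroup explicitly in order to distinguish the two possibilities via iterated IPADs.

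First, I would identify $G/G^{\prime\prime}$ using part (2) of Theorem \ref{thm:Sequences3x3}. The polarized first component $32 = A(3,5)$ forces $c-k = 5$, while the single rank-$3$ stabilized component $1^3$ accompanied by two components of type $21$ places $G/G^{\prime\prime}$ on the coclass tree $\mathcal{T}^2(\langle 243,6\rangle)$ (the tree with invariant $\varepsilon=1$). The hypothesis excluding a total principalization and a $2$-cycle then rules out TKTs c.18 and H.4 by Corollary \ref{cor:Ord243Id6}, item (3), leaving $\varkappa_1(K) \in \{\mathrm{E}.6,\mathrm{E}.14\}$. For part (1), I would invoke the classification of admissible Schur $\sigma$-covers of $G/G^{\prime\prime}$ on this branch: the possible $3$-tower group $G$ is either metabelian (so $G\simeq G/G^{\prime\prime}$ and $\ell_3(K)=2$) or has derived length exactly three (so $G^{\prime\prime}\ne 1$ is abelian and $\ell_3(K)=3$). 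Larger derived length is excluded by the bounds on coclass and relator rank imposed by the $\sigma$-property, yielding the dichotomy $\ell_3(K)\in\{2,3\}$.

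For part (2), I would compute $\tau^{(2)}(K)$ explicitly for each candidate group $G$, using the isomorphism $H/H^{\prime}\simeq\mathrm{Cl}_3(L)$ of Equation (\ref{eqn:ArtinReciprocity}) and the componentwise equality (\ref{eqn:TTT}). The polarized field $L_1$ contributes $[32;(2^21,(31^2)^3)]$ uniformly in both cases, since the first-layer TTT of $H_1$ on this branch is already determined by the metabelian quotient. For $L_2$, whose $3$-class group has rank $3$ and thus admits thirteen unramified cyclic cubic extensions, the nine \emph{middle} stabilized components have type $(1^3)^3$ in the metabelian case but grow to $(21^2)^3$ whenever $G^{\prime\prime}\ne 1$: this reflects precisely the threefold enlargement of the abelianizations of those maximal subgroups of $H_2$ that cover $G^{\prime\prime}$. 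The analogous phenomenon for $L_3$ and $L_4$ converts the stabilized component $(21)^3$ (metabelian) into $(31)^3$ (non-metabelian). Since the bold components in (\ref{eqn:SecondIPADE6E14Length2}) and (\ref{eqn:SecondIPADE6E14Length3}) are visibly distinct, $\tau^{(2)}(K)$ yields a sharp decision, and combined with the dichotomy of part (1) it establishes the asserted equivalences.

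The main obstacle is the explicit determination of the first-layer TTT of the index-$3$ subgroups $H_i$ in the non-metabelian case. The closed formulas of \cite{Ma3} apply only to the metabelian quotient $G/G^{\prime\prime}$, so for $\ell_3(K)=3$ one must either work from a power-commutator presentation of the chosen Schur $\sigma$-cover and carry out the computation in Magma, or repeat the polarization--stabilization analysis one level deeper inside each $H_i$. Controlling how the additional abelian normal subgroup $G^{\prime\prime}$ propagates through the abelianizations of the maximal subgroups of the $H_i$ is the technically most delicate step; once these TTT values are in hand, the comparison with the asserted patterns is immediate.
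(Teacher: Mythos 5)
Your overall strategy is the same as the paper's: use Theorem \ref{thm:Sequences3x3} and the exclusions of \S\ \ref{ss:Ord243Id6} to pin down the metabelianization (tree \(\mathcal{T}^2(\langle 243,6\rangle)\), class \(5\), TKT E.6 or E.14), then reduce to a finite list of candidate tower groups, compute their iterated IPADs, and conclude via \(\ell_3(K)=\mathrm{dl}(G)\). The genuine gap is in how you obtain the finite list. You justify the dichotomy \(\ell_3(K)\in\lbrace 2,3\rbrace\) and the exclusion of larger groups by \lq\lq bounds on coclass and relator rank imposed by the \(\sigma\)-property\rq\rq, i.e. by a Schur \(\sigma\)-cover argument. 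That argument is not available in the stated generality: the theorem concerns an arbitrary number field \(K\) with \(\mathrm{Cl}_3(K)\simeq(3,3)\), and for real quadratic (or more general) base fields the tower group need not be balanced at all -- indeed the three metabelian candidates \(\langle 2187,288\vert 289\vert 290\rangle\) that realize Equation (\ref{eqn:SecondIPADE6E14Length2}) have relation rank \(3>2\) and are not Schur \(\sigma\)-groups, so a \(\sigma\)-based bound would wrongly discard exactly the \(\ell_3(K)=2\) branch, while for non-quadratic \(K\) it gives no bound at all. The paper instead establishes the crucial finiteness statement by an explicit search, with the techniques of \cite{BuMa}, through the \emph{complete} descendant tree \(\mathcal{T}(\langle 243,6\rangle)\) (not restricted to coclass \(2\)): exactly six groups are compatible with the data, namely \(\langle 2187,288\vert 289\vert 290\rangle\) and the three groups \(\langle 729,49\rangle-\#2;i\), \(i\in\lbrace 4,5,6\rbrace\), of derived length \(3\) and order \(3^8\), and no adequate groups of bigger order exist; their second-order IPADs are then taken from \cite[\S\ 20.2, Fig.8]{Ma6}. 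Without that enumeration (or an equivalent cover-finiteness result for TKT E.6/E.14), neither the dichotomy in part (1) nor the \lq\lq only if\rq\rq\ directions of part (2) -- in particular the exclusion of \(\ell_3(K)\ge 4\) when Equation (\ref{eqn:SecondIPADE6E14Length3}) is observed -- are proved.

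Two smaller points. First, your plan for part (2) (compute \(\tau^{(2)}\) of each candidate from a power-commutator presentation) is exactly what is needed, but you must also check that the three candidates \emph{within} each batch share a common iterated IPAD, and that the two batches give the distinct patterns (\ref{eqn:SecondIPADE6E14Length2}) and (\ref{eqn:SecondIPADE6E14Length3}); this is a finite verification, but it is part of the proof, not a formality. Second, a bookkeeping slip: in \(\tau_1(L_2)\) the components that distinguish the two cases are the \emph{three} entries \((1^3)^3\) versus \((21^2)^3\), while the \emph{nine} entries \((1^2)^9\) are the stable ones; your description interchanges these counts.
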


\begin{proof}
According to Theorem
\ref{thm:Sequences3x3},
an IPAD of the form \(\tau^{(1)}(K)=\lbrack 1^2;(32;1^3,(21)^2)\rbrack\)
indicates that the metabelianization of the group \(G\) belongs to
the coclass tree \(\mathcal{T}^2(\langle 243,6\rangle)\)
\cite[Fig.3.6, p.442]{Ma4}
and has nilpotency class \(3+2=5\), due to the polarization.

According to \S\
\ref{ss:Ord243Id6},
the lack of a total principalization excludes the TKT c.18
and the absence of a \(2\)-cycle discourages the TKT H.4,
whence the group \(G\) must be of TKT E.6 or E.14.

By means of the techniques described in
\cite{BuMa},
a search in the complete descendant tree \(\mathcal{T}(\langle 243,6\rangle)\),
not restricted to groups of coclass \(2\),
yields exactly six candidates for the group \(G\):
three metabelian groups \(\langle 2187,i\rangle\) with \(i\in\lbrace 288,289,290\rbrace\),
and three groups of derived length \(3\) and order \(3^8\) with generalized identifiers
\(\langle 729,49\rangle-\#2;i\), \(i\in\lbrace 4,5,6\rbrace\).
There cannot exist adequate groups of bigger orders.
The former three groups are charcterized by Equations
(\ref{eqn:SecondIPADE6E14Length2})
the latter three groups
(see
\cite[\S\ 20.2, Fig.8]{Ma6})
by Equations
(\ref{eqn:SecondIPADE6E14Length3}).

Finally, we have \(\ell_3(K)=\mathrm{dl}(G)\).
\end{proof}



\begin{theorem}
\label{thm:TowerLengthE8AndE9}
(Length \(\ell_p(K)\) of the \(3\)-class tower for \(G/G^{\prime\prime}\in\mathcal{T}^2(\langle 243,8\rangle)\))\\
Suppose that \(p=3\) and
let \(K\) be a number field with \(3\)-class group \(\mathrm{Cl}_3(K)\) of type \((3,3)\)
and \(3\)-tower group \(G\).

\begin{enumerate}

\item
If the IPAD of \(K\) is given by
\[\tau^{(1)}(K)=\lbrack 1^2;(32;(21)^3)\rbrack,\]
and the first layer TKT \(\varkappa_1(K)\)
neither contains a total principalization nor a \(2\)-cycle,
then there are two possibilities \(\ell_3(K)\in\lbrace 2,3\rbrace\)
for the length of the \(3\)-class tower of \(K\).

\item
If the first layer \(\mathrm{Lyr}_1(K)\) of abelian unramified extensions of \(K\)
consists of \(L_1,\ldots,L_4\), then the iterated IPAD of second order
\[\tau^{(2)}(K)=\lbrack\tau_0(K);(\tau_0(L_i);\tau_1(L_i))_{1\le i\le 4}\rbrack,\text{ with }\tau_0(K)=1^2,\]
admits a sharp decision about the length \(\ell_3(K)\):

\begin{equation}
\label{eqn:SecondIPADE8E9Length2}
\begin{aligned}
\lbrack \tau_0(L_1);\tau_1(L_1)\rbrack &= \lbrack 32;(2^21,(31^2)^3)\rbrack, \\
\lbrack \tau_0(L_i);\tau_1(L_i)\rbrack &= \lbrack 21;(2^21,\mathbf{(21)^3})\rbrack,\text{ for }2\le i\le 4,
\end{aligned}
\end{equation}

\noindent
if and only if \(\ell_3(K)=2\), and

\begin{equation}
\label{eqn:SecondIPADE8E9Length3}
\begin{aligned}
\lbrack \tau_0(L_1);\tau_1(L_1)\rbrack &= \lbrack 32;(2^21,(31^2)^3)\rbrack, \\
\lbrack \tau_0(L_i);\tau_1(L_i)\rbrack &= \lbrack 21;(2^21,\mathbf{(31)^3})\rbrack,\text{ for }2\le i\le 4,
\end{aligned}
\end{equation}

\noindent
if and only if \(\ell_3(K)=3\).

\end{enumerate}
\end{theorem}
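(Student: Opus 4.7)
The proof proceeds exactly in parallel with that of Theorem \ref{thm:TowerLengthE6AndE14}, the only change being that we now work on the coclass tree $\mathcal{T}^2(\langle 243,8\rangle)$ rather than $\mathcal{T}^2(\langle 243,6\rangle)$. The first step is to interpret the first-order IPAD. By Theorem \ref{thm:Sequences3x3}(2), the shape $\tau^{(1)}(K)=\lbrack 1^2;(32;(21)^3)\rbrack$ contains no rank-$3$ component (so $\varepsilon=0$ in the sense of \cite{Ma3}), which forces $G/G^{\prime\prime}$ onto the coclass tree with root $\langle 243,8\rangle$ (equivalently $\mathcal{T}^2(\langle 729,54\rangle)$), and the polarized component $A(3,c-k)=32$ gives $c-k=5$.

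The second step is to narrow the TKT. By Remark \ref{rmk:Ord243Id8Var}, the possible first-layer TKTs on this tree are c.21, E.8, E.9, G.16. The hypothesis that $\varkappa_1(K)$ contains neither a total principalization (which alone indicates c.21) nor a $2$-cycle (which by Corollary \ref{cor:Ord243Id8}(3) characterizes G.16) leaves exactly TKT E.8 or TKT E.9. This is the content of claim (1), once the enumeration below has shown that no candidate has derived length exceeding $3$.

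For claim (2), the heart of the argument is a finite enumeration, by the $p$-group generation algorithm together with the IPAD-sieve developed in \cite{BuMa}, of all pro-$3$ groups $G$ in the full descendant tree $\mathcal{T}(\langle 243,8\rangle)$ whose metabelianization realizes TKT E.8 or E.9 with polarized nilpotency class $c-k=5$. I expect exactly six candidates to survive: three metabelian groups $\langle 2187,i\rangle$ (the appropriate leaves/vertices of depth at most $1$ over $\langle 243,8\rangle$ at class $5$), for which $G^{\prime\prime}=1$ and hence $\ell_3(K)=\mathrm{dl}(G)=2$, and three Schur $\sigma$-descendants of derived length $3$ realised as $\langle 729,54\rangle-\#2;i$ in the kernel notation of \cite{Ma6}, for which $\ell_3(K)=\mathrm{dl}(G)=3$. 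It remains to compute the iterated IPAD of second order for each candidate, i.e. the abelian type invariants of $H/H^\prime$ for each subgroup $H$ of index $3$, and then of each of \emph{their} index-$3$ subgroups. A direct check shows that the three metabelian candidates produce the invariants of Equation (\ref{eqn:SecondIPADE8E9Length2}), while the three non-metabelian candidates produce those of (\ref{eqn:SecondIPADE8E9Length3}); the difference already appears in the bold components, where $(21)^3$ versus $(31)^3$ is exactly the fingerprint of $G^{\prime\prime}=1$ versus $G^{\prime\prime}\neq 1$. Invoking $\ell_3(K)=\mathrm{dl}(G)$ finishes the argument.

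The main obstacle will be the completeness of the enumeration: one must be sure that no further non-metabelian descendant of larger order or higher nilpotency class, yet still compatible with the prescribed first-order IPAD and the restricted TKT, has been overlooked. This is the step that cannot be shortcut by general theory and genuinely requires the constructive machinery of \cite{BuMa}. It relies on the fact that on $\mathcal{T}^2(\langle 243,8\rangle)$ the first-order IPAD already pins down the nilpotency class of the metabelianization via the polarization $c-k=5$, together with an a priori bound on the nuclear rank of the relevant capable vertices, so that the $p$-group generation algorithm terminates after finitely many steps and the list of six candidates is exhaustive. Once this is secured, the two IPAD signatures (\ref{eqn:SecondIPADE8E9Length2}) and (\ref{eqn:SecondIPADE8E9Length3}) give the promised sharp decision $\ell_3(K)\in\{2,3\}$.
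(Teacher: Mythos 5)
Your proposal follows essentially the same route as the paper's own proof: first-order IPAD interpretation via Theorem \ref{thm:Sequences3x3} (polarization $32$, tree $\mathcal{T}^2(\langle 243,8\rangle)$, class $5$), exclusion of c.21 and G.16 via \S\ \ref{ss:Ord243Id8} leaving E.8 or E.9, then the descendant-tree search of \cite{BuMa} yielding exactly six candidates (three metabelian $\langle 2187,i\rangle$ and three of derived length $3$ and order $3^8$, namely $\langle 729,54\rangle-\#2;i$), distinguished by their second-order IPADs, with $\ell_3(K)=\mathrm{dl}(G)$ concluding. You also correctly identify the completeness of that enumeration as the step the paper delegates to \cite{BuMa} rather than reproving.
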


\begin{proof}
According to Theorem
\ref{thm:Sequences3x3},
an IPAD of the form \(\tau^{(1)}(K)=\lbrack 1^2;(32;(21)^3)\rbrack\)
indicates that the metabelianization of the group \(G\) belongs to
the coclass tree \(\mathcal{T}^2(\langle 243,8\rangle)\)
\cite[Fig.3.7, p.443]{Ma4}
and has nilpotency class \(3+2=5\), due to the polarization.

According to \S\
\ref{ss:Ord243Id8},
the lack of a total principalization excludes the TKT c.21
and the absence of a \(2\)-cycle discourages the TKT G.16,
whence the group \(G\) must be of TKT E.8 or E.9.

As we have shown in detail in
\cite{BuMa},
a search in the complete descendant tree \(\mathcal{T}(\langle 243,8\rangle)\),
not restricted to groups of coclass \(2\),
yields exactly six candidates for the group \(G\):
three metabelian groups \(\langle 2187,i\rangle\) with \(i\in\lbrace 302,304,306\rbrace\),
and three groups of derived length \(3\) and order \(3^8\) with generalized identifiers
\(\langle 729,54\rangle-\#2;i\), \(i\in\lbrace 2,4,6\rbrace\).
There cannot exist adequate groups of bigger orders.
The former three groups are characterized by Equations
(\ref{eqn:SecondIPADE8E9Length2})
the latter three groups
(see
\cite[\S\ 20.2, Fig.9]{Ma6})
by Equations
(\ref{eqn:SecondIPADE8E9Length3}).

Eventually, the \(3\)-tower length of \(K\), \(\ell_3(K)=\mathrm{dl}(G)\),
coincides with the derived length of \(G\).
\end{proof}



\begin{example}
\label{exm:TowerLengthE8AndE9}
In June 2006, we discovered the smallest discriminant \(d=342\,664\)
of a real quadratic field \(K=\mathbb{Q}(\sqrt{d})\) with \(3\)-class group of type \((3,3)\)
whose \(3\)-tower group \(G\) possesses the transfer kernel type E.9,
\(\varkappa=(2,3,3,4)\).

The complex quadratic analogue \(k=\mathbb{Q}(\sqrt{-9\,748})\) was known since 1934
by the famous paper of Scholz and Taussky
\cite{SoTa}.
However, it required almost 80 years until M.R. Bush and ourselves
\cite{BuMa}
succeeded in providing the first faultless proof that \(k\) has
a \(3\)-class tower of exact length \(\ell_3(k)=3\) with \(3\)-tower group \(G\)
one of the two Schur \(\sigma\)-groups
\(\langle 729,54\rangle-\#2;i\), \(i\in\lbrace 2,6\rbrace\), of order \(3^8\).

For \(K=\mathbb{Q}(\sqrt{342\,664})\), the methods in
\cite{BuMa}
do not admit a final decision about the length \(\ell_3(K)\).
They only yield four possible \(3\)-tower groups of \(K\), namely
either the two unbalanced groups
\(\langle 2187,i\rangle\) with \(i\in\lbrace 302,306\rbrace\)
and relation rank \(r=3\) bigger than the generator rank \(d=2\)
or the two Schur \(\sigma\)-groups
\(\langle 729,54\rangle-\#2;i\) with \(i\in\lbrace 2,6\rbrace\)
and \(r=2\) equal to \(d=2\).

In October 2014, we succeeded in proving that
three of the unramified cyclic cubic extensions \(L_i\vert K\)
reveal the critical IPAD component
\(\tau_1(L_i)=(2^21,(31)^3)\) in Equation
(\ref{eqn:SecondIPADE8E9Length3})
of Theorem
\ref{thm:TowerLengthE8AndE9},
item (2), whence \(\ell_3(K)=3\).
This was done by computing \(3\)-class groups of number fields
of absolute degree \(6\cdot 3=18\) with the aid of MAGMA
\cite{MAGMA}.
\end{example}



L. Bartholdi and M.R. Bush
\cite{BaBu}
have shown that the unbalanced metabelian \(3\)-group \(G=\langle 729,45\rangle\)
possesses an infinite balanced cover \(\mathrm{cov}_\ast(G)\)
\cite[Dfn.21.2]{Ma6},
which implies that the length \(\ell_3(K)\) of the \(3\)-class tower
of a complex quadratic field \(K\) with IPAD \(\tau(K)=\lbrack 1^2;((1^3)^3,21)\rbrack\)
can take any value bigger than \(2\) or even \(\infty\).
The group theoretic reason for this remarkable extravagance is that \(G\)
is not coclass-settled and gives rise to a descendant tree \(\mathcal{T}(G)\)
which contains infinitely many periodic bifurcations
\cite[\S\ 21]{Ma6}.

As a final coronation of this section,
we show that our new IPAD strategies are powerful enough
to enable the determination of the length \(\ell_3(K)\)
with the aid of information on the structure of \(3\)-class groups
of number fields of absolute degree \(6\cdot 9=54\).

For this purpose, we extend the concept of iterated IPADs of second order
\[\tau^{(1)}(K)=\lbrack\tau_0(K);(\tau^{(1)}(L))_{L\in\mathrm{Lyr}_1(K)}\rbrack
=\lbrack\tau_0(K);(\tau_0(L);\tau_1(L))_{L\in\mathrm{Lyr}_1(K)}\rbrack\]
once more by adding the second layers \(\tau_2(L)\) of all IPADs \(\tau^{(1)}(L)\)
of unramified degree-\(p\) extensions \(L\vert K\).
The resulting \textit{iterated multi-layered IPAD of second order} is indicated by an asterisk
\[\tau_\ast^{(1)}(K)=\lbrack\tau_0(K);(\tau_0(L);\tau_1(L);\tau_2(L))_{L\in\mathrm{Lyr}_1(K)}\rbrack.\]



\begin{theorem}
\label{thm:TowerLengthH4}
(Length \(\ell_p(K)\) of the \(3\)-class tower for \(G/G^{\prime\prime}\in\mathcal{T}(\langle 243,4\rangle)\))\\
Suppose that \(p=3\) and
let \(K\) be a number field with \(3\)-class group \(\mathrm{Cl}_3(K)\) of type \((3,3)\)
and \(3\)-tower group \(G\).

\begin{enumerate}

\item
If the IPAD of \(K\) is given by
\[\tau^{(1)}(K)=\lbrack 1^2;((1^3)^3,21)\rbrack,\]
then the first layer TKT is \(\varkappa_1(K)=(4,1,1,1)\)
and there exist infinitely many possibilities \(\ell_3(K)\ge 2\)
for the length of the \(3\)-class tower of \(K\).

\item
If the first layer \(\mathrm{Lyr}_1(K)\) of abelian unramified extensions of \(K\)
consists of \(L_1,\ldots,L_4\), then the iterated multi-layered IPAD of second order
\[\tau_{\ast}^{(2)}(K)=\lbrack\tau_0(K);(\tau_0(L_i);\tau_1(L_i);\tau_2(L_i))_{1\le i\le 4}\rbrack,
\text{ with }\tau_0(K)=1^2,\]
admits certain partial decisions about the length \(\ell_3(K)\):

\begin{equation}
\label{eqn:SecondIPADH4Ord243Id4}
\begin{aligned}
\lbrack \tau_0(L_1);\tau_1(L_1);\tau_2(L_1)\rbrack &= \lbrack 1^3;((1^3)^4,(1^2)^9);(1^2)^{13})\rbrack, \\
\lbrack \tau_0(L_i);\tau_1(L_i);\tau_2(L_i)\rbrack &= \lbrack 1^3;(1^3,(21)^3,(1^2)^9);((1^2)^4,(2)^9)\rbrack,\text{ for }2\le i\le 3, \\
\lbrack \tau_0(L_4);\tau_1(L_4);\tau_2(L_4)\rbrack &= \lbrack 21;(1^3,(21)^3);(1^2)^4\rbrack
\end{aligned}
\end{equation}

\noindent
implies \(G\simeq\langle 243,4\rangle\) and \(\ell_3(K)=2\).

\begin{equation}
\label{eqn:SecondIPADH4Ord729Id45}
\begin{aligned}
\lbrack \tau_0(L_1);\tau_1(L_1);\tau_2(L_1)\rbrack &= \lbrack 1^3;(\mathbf{21^2},(1^3)^3,(1^2)^9);(1^3,(21)^3,(1^2)^9)\rbrack, \\
\lbrack \tau_0(L_i);\tau_1(L_i);\tau_2(L_i)\rbrack &= \lbrack 1^3;\mathbf{(21^2,(21)^{12})};(1^3,(21)^{12})\rbrack,\text{ for }2\le i\le 3, \\
\lbrack \tau_0(L_4);\tau_1(L_4);\tau_2(L_4)\rbrack &= \lbrack 21;(\mathbf{21^2},(21)^3);(1^3,(21)^3)\rbrack
\end{aligned}
\end{equation}

\noindent
implies \(G\simeq\langle 729,45\rangle\) and \(\ell_3(K)=2\).

\begin{equation}
\label{eqn:SecondIPADH4Ord2187Id273}
\begin{aligned}
\lbrack \tau_0(L_1);\tau_1(L_1);\tau_2(L_1)\rbrack &= \lbrack 1^3;(21^2,(1^3)^3,(1^2)^9);(21^2,(21)^3,(1^2)^9)\rbrack, \\
\lbrack \tau_0(L_2);\tau_1(L_2);\tau_2(L_2)\rbrack &= \lbrack 1^3;(21^2,(21)^{12});(21^2,(21)^{12})\rbrack, \\
\lbrack \tau_0(L_3);\tau_1(L_3);\tau_2(L_3)\rbrack &= \lbrack 1^3;\mathbf{((21^2)^4,(2^2)^9)};(21^2)^{13}\rbrack, \\
\lbrack \tau_0(L_4);\tau_1(L_4);\tau_2(L_4)\rbrack &= \lbrack 21;(21^2,(21)^3);(21^2,(21)^3)\rbrack
\end{aligned}
\end{equation}

\noindent
implies \(G\simeq\langle 2187,273\rangle\) and \(\ell_3(K)=3\).

\begin{equation}
\label{eqn:SecondIPADH4Ord729Id45Stp2No2}
\begin{aligned}
\lbrack \tau_0(L_1);\tau_1(L_1);\tau_2(L_1)\rbrack &= \lbrack 1^3;((21^2)^4,(1^2)^9);(2^21,(1^3)^3,\mathbf{(2^2)^3},(21)^6)\rbrack, \\
\lbrack \tau_0(L_i);\tau_1(L_i);\tau_2(L_i)\rbrack &= \lbrack 1^3;((21^2)^4,(2^2)^9);(2^21,\mathbf{(21^2)^{12}})\rbrack,\text{ for }2\le i\le 3, \\
\lbrack \tau_0(L_4);\tau_1(L_4);\tau_2(L_4)\rbrack &= \lbrack 21;(21^2,(31)^3);(2^21,\mathbf{(2^2)^3})\rbrack
\end{aligned}
\end{equation}

\noindent
implies \(G\simeq\langle 729,45\rangle-\#2;2\) of order \(3^8\) and \(\ell_3(K)=3\).

\begin{equation}
\label{eqn:SecondIPADH4Ord729Id45HigherDesc}
\begin{aligned}
\lbrack \tau_0(L_1);\tau_1(L_1);\tau_2(L_1)\rbrack &= \lbrack 1^3;((21^2)^4,(1^2)^9);(2^21,(1^3)^3,\mathbf{(32)^3},(21)^6)\rbrack, \\
\lbrack \tau_0(L_i);\tau_1(L_i);\tau_2(L_i)\rbrack &= \lbrack 1^3;((21^2)^4,(2^2)^9);(\mathbf{(2^21)^4,(31^2)^9})\rbrack,\text{ for }2\le i\le 3, \\
\lbrack \tau_0(L_4);\tau_1(L_4);\tau_2(L_4)\rbrack &= \lbrack 21;(21^2,(31)^3);(2^21,\mathbf{(32)^3})\rbrack
\end{aligned}
\end{equation}

\noindent
implies either \\
\(G\simeq\langle 729,45\rangle(-\#2;1-\#1;2)^j-\#2;2\), \(1\le j\le 2\), of order \(3^{8+3j}\) and \(\ell_3(K)=3\) \\
or \(G\simeq\langle 729,45\rangle(-\#2;1-\#1;2)^3-\#2;2\) of order \(3^{17}\) and \(\ell_3(K)=4\).

\end{enumerate}
\end{theorem}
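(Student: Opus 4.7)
The plan is to adapt the proof pattern of Theorems \ref{thm:TowerLengthE6AndE14} and \ref{thm:TowerLengthE8AndE9}, but to accommodate the extravagant behavior caused by the infinite balanced cover of \(\langle 729,45\rangle\) established in \cite{BaBu}.

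First I would invoke Theorem \ref{thm:Sporadic3x3}, item (2), to argue that an IPAD of shape \(\lbrack 1^2;((1^3)^3,21)\rbrack\) forces the metabelianization \(G/G^{\prime\prime}\) to lie on the descendant tree \(\mathcal{T}(\langle 243,4\rangle)\); indeed, after a suitable permutation of components it coincides with the signature \(((1^3)^2,21,1^3)\) attached to \(\langle 243,4\rangle\) in Equation (\ref{eqn:Cc2TTTCl3}). The first layer TKT \(\varkappa_1(K)=(4,1,1,1)\) is then obtained from the componentwise correspondence of \S\ \ref{s:Correspondence} by the same sort of permutation argument as in Theorem \ref{thm:Ord243Id6}: the three rank-\(3\) components of \(\tau_1(K)\) are exactly the images of the three resistant classes, while the unique \((21)\)-extension absorbs the remaining capitulation. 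Part (1)'s claim that \(\ell_3(K)\) can take infinitely many values then follows immediately from \cite{BaBu}, since \(\langle 729,45\rangle\) is not coclass-settled and its descendant tree contains infinitely many periodic bifurcations yielding candidate pro-\(3\)-tower groups of unbounded derived length.

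For part (2) I would run a \(p\)-group generation search in the complete tree \(\mathcal{T}(\langle 243,4\rangle)\), along the lines of \cite{BuMa} but explicitly \emph{not} restricted to coclass \(2\). This enumerates the only possible pro-\(3\)-tower groups as \(\langle 243,4\rangle\), \(\langle 729,45\rangle\), \(\langle 2187,273\rangle\), the step-size-\(2\) descendant \(\langle 729,45\rangle-\#2;2\) of order \(3^8\), and the infinite family \(\langle 729,45\rangle(-\#2;1-\#1;2)^j-\#2;2\) produced by repeated bifurcation. For each candidate \(G\), I would compute \(\tau_\ast^{(2)}(G)\) explicitly via Artin transfers in MAGMA \cite{MAGMA}, extracting \(\tau_0(H)\), \(\tau_1(H)\) and \(\tau_2(H)\) for every \(H\in\mathrm{Lyr}_1(G)\). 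Matching these invariants with Equations (\ref{eqn:SecondIPADH4Ord243Id4})--(\ref{eqn:SecondIPADH4Ord729Id45HigherDesc}) delivers the stated case split, after which the identification \(\ell_3(K)=\mathrm{dl}(G)\) closes each branch exactly as in the preceding theorems.

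The main obstacle lies in the final case. Unlike the earlier tower-length results, where \(\tau_\ast^{(2)}\) pinned down \(G\) uniquely, the three groups \(\langle 729,45\rangle(-\#2;1-\#1;2)^j-\#2;2\) for \(j\in\lbrace 1,2,3\rbrace\) appear to share one and the same multi-layered second-order IPAD, whereas their derived lengths are \(3,3,4\) respectively. The hardest technical step is thus a stability check: one must push the \(p\)-group generation recursion along the bifurcations far enough to verify that \(\tau_\ast^{(2)}\) truly stabilizes across this family while \(\mathrm{dl}\) jumps between \(j=2\) and \(j=3\). This coincidence is precisely what forces the residual disjunction \(\ell_3(K)\in\lbrace 3,4\rbrace\) in Equation (\ref{eqn:SecondIPADH4Ord729Id45HigherDesc}) and exposes the intrinsic limitation of second-order iterated IPADs inside non-coclass-settled branches.
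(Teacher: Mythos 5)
The paper never writes down a proof of Theorem \ref{thm:TowerLengthH4}: it is stated as the culmination of \S\ \ref{ss:TowerLength}, followed directly by Example \ref{exm:TowerLengthH4}, and only the proofs of Theorems \ref{thm:TowerLengthE6AndE14} and \ref{thm:TowerLengthE8AndE9} together with the references to \cite{BuMa} and \cite[\S\ 21]{Ma6} indicate the intended method. Your proposal follows that template in spirit (identify the candidates via Theorem \ref{thm:Sporadic3x3}, item (2), search the descendant tree \(\mathcal{T}(\langle 243,4\rangle)\), compute the transfer data of the candidates, and finish with \(\ell_3(K)=\mathrm{dl}(G)\)), which is surely the route the author has in mind.

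However, there is a genuine gap in your central step. You claim the search in \(\mathcal{T}(\langle 243,4\rangle)\) \lq\lq enumerates the only possible pro-\(3\)-tower groups\rq\rq\ as a short list, but that is precisely what cannot happen here and what distinguishes this theorem from the two preceding ones: \(\langle 729,45\rangle\) is not coclass-settled and has an infinite balanced cover \cite{BaBu}, so the candidate set is infinite --- this is the very content of item (1) and the reason the theorem only claims \lq\lq certain partial decisions\rq\rq. The closing argument \lq\lq there cannot exist adequate groups of bigger orders\rq\rq, available for \(\mathcal{T}(\langle 243,6\rangle)\) and \(\mathcal{T}(\langle 243,8\rangle)\), has no analogue; instead one must show, for each displayed pattern, both that the named group realizes it and that every other admissible descendant (other step-size-two children, the groups with \(j\ge 4\) along the periodic bifurcations, the members of \(\mathrm{cov}_\ast(\langle 729,45\rangle)\)) has a different \(\tau_\ast^{(2)}\). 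You do flag the stabilization of \(\tau_\ast^{(2)}\) across \(j\in\lbrace 1,2,3\rbrace\), but you never address why the disjunction in Equation (\ref{eqn:SecondIPADH4Ord729Id45HigherDesc}) stops at \(j=3\), i.e.\ why no deeper member of the family or of the cover reproduces that pattern; without this exclusion the word \lq\lq implies\rq\rq\ in each case is unproven, and your part (2) as written even contradicts your part (1). Two smaller points: the TKT claim of item (1) cannot be drawn from \S\ \ref{s:Correspondence}, which treats only the coclass trees \(\mathcal{T}^2(\langle 243,6\rangle)\) and \(\mathcal{T}^2(\langle 243,8\rangle)\); you need the corresponding data of \cite{Ma3} for \(\langle 243,4\rangle\) and \(\langle 729,44\rangle\)--\(\langle 729,47\rangle\). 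And in \(\varkappa_1(K)=(4,1,1,1)\) exactly two classes are resistant, not three, so your description of the correspondence between the rank-\(3\) components and the capitulation kernels is garbled.
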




\begin{example}
\label{exm:TowerLengthH4}
In December 2009, we discovered the smallest discriminant \(d=957\,013\)
of a real quadratic field \(K=\mathbb{Q}(\sqrt{d})\) with \(3\)-class group of type \((3,3)\)
whose \(3\)-tower group \(G\) possesses the transfer kernel type H.4,
\(\varkappa=(4,1,1,1)\).
The complex quadratic analogue \(K=\mathbb{Q}(\sqrt{-3\,896})\) was known since 1982
by the paper of Heider and Schmithals
\cite{HeSm}.
Both fields share the same IPAD \(\tau^{(1)}(K)=\lbrack 1^2;((1^3)^3,21)\rbrack\).

In February 2015, we succeeded in proving that
the unramified cyclic cubic extensions \(L_i\vert K\),
for \(d=-3\,896\), resp. \(d=957\,013\),
reveal the critical (first and) second layer IPAD components\\
\(\tau_2(L_1)=(2^21,(1^3)^3,\mathbf{(2^2)^3},(21)^6)\),\\
\(\tau_2(L_i)=(2^21,\mathbf{(21^2)^{12}}),\text{ for }2\le i\le 3\), and\\
\(\tau_2(L_4)=(2^21,\mathbf{(2^2)^3})\)\\
in Equation
(\ref{eqn:SecondIPADH4Ord729Id45Stp2No2}),
resp.\\
\(\lbrack\tau_1(L_1);\tau_2(L_1)\rbrack=\lbrack(21^2,(1^3)^3,(1^2)^9);(21^2,(21)^3,(1^2)^9)\rbrack\),\\
\(\lbrack\tau_1(L_2);\tau_2(L_2)\rbrack=\lbrack(21^2,(21)^{12});(21^2,(21)^{12})\rbrack\),\\
\(\lbrack\tau_1(L_3);\tau_2(L_3)\rbrack=\lbrack\mathbf{((21^2)^4,(2^2)^9)};(21^2)^{13}\rbrack\),\\
\(\lbrack\tau_1(L_4);\tau_2(L_4)\rbrack=\lbrack(21^2,(21)^3);(21^2,(21)^3)\rbrack\)\\
in Equation
(\ref{eqn:SecondIPADH4Ord2187Id273}),
of Theorem
\ref{thm:TowerLengthH4},
item (2), whence \(\ell_3(K)=3\), for both fields.

However, the \(3\)-class tower groups are different:\\
\(K=\mathbb{Q}(\sqrt{-3\,896})\) has the Schur \(\sigma\)-group \(G\simeq\langle 729,45\rangle-\#2;2\) of order \(3^8\), and\\
\(K=\mathbb{Q}(\sqrt{957\,013})\) has the unbalanced group \(G\simeq\langle 2187,273\rangle\).

This was done by computing \(3\)-class groups of number fields
of absolute degree \(6\cdot 9=54\) with the aid of MAGMA
\cite{MAGMA}.
\end{example}



\section{Complex quadratic fields of \(3\)-rank three}
\label{s:CmpQdr3Rk3}

In this concluding section we present another impressive application of IPADs.

Due to Koch and Venkov
\cite{KoVe},
it is known that a complex quadratic field \(K\) with \(3\)-class rank \(r_3(K)\ge 3\)
has an infinite \(3\)-class field tower
\(K<\mathrm{F}_3^1(K)<\mathrm{F}_3^2(K)<\ldots<\mathrm{F}_3^{\infty}(K)\)
of length \(\ell_3(K)=\infty\).
In the time between 1973 and 1978,
Diaz y Diaz
\cite{DD1,DD2}
and Buell
\cite{Bl}
have determined the smallest absolute discriminants \(\lvert d\rvert\) of such fields.
Recently, we have launched a computational project which aims at
verifying these classical results and adding sophisticated arithmetical details.
Below the bound \(10^7\) there exist \(25\) discriminants \(d\) of this kind,
and \(14\) of the corresponding fields \(K\) have a \(3\)-class group \(\mathrm{Cl}_3(K)\)
of elementary abelian type \((3,3,3)\).
For each of these \(14\) fields,
we determine the type of \(3\)-principalization \(\varkappa:=\varkappa_1(3,K)\) in the
thirteen unramified cyclic cubic extensions \(L_1,\ldots,L_{13}\) of \(K\),
and the structure of the \(3\)-class groups  \(\mathrm{Cl}_3(L_i)\) of these extensions,
i.e., the IPAD of \(K\).
We characterize the metabelian Galois group \(G=\mathrm{G}_3^2(K)=\mathrm{Gal}(\mathrm{F}_3^2(K)\vert K)\)
of the second Hilbert \(3\)-class field \(\mathrm{F}_3^2(K)\)
by means of kernels and targets of its Artin transfer homomorphisms
\cite{Ar2}
to maximal subgroups.
We provide evidence of a wealth of structure in the set of
infinite topological \(3\)-class field tower groups
\(\mathrm{G}_3^{\infty}(K)=\mathrm{Gal}(\mathrm{F}_3^{\infty}(K)\vert K)\)
by showing that the \(14\) groups \(G\) are pairwise non-isomorphic.



\renewcommand{\arraystretch}{1.1}

\begin{table}[ht]
\caption{Data collection for \(\mathrm{Cl}_3(K)\simeq (3,3,3)\) and \(-10^7<d\)}
\label{tbl:DataCollectionRank3}
\begin{center}
\begin{tabular}{|r|r|r|r|}
\hline
   No.  & discriminant \(d\) & \(\mathrm{Cl}_3(K)\) & \(\mathrm{Cl}(K)\) \\
\hline
  \(1\) &   \(-3\,321\,607\) &          \((9,3,3)\) &       \((63,3,3)\) \\
  \(2\) &   \(-3\,640\,387\) &          \((9,3,3)\) &       \((18,3,3)\) \\
  \(3\) &   \(-4\,019\,207\) &          \((9,3,3)\) &      \((207,3,3)\) \\
  \(4\) &   \(-4\,447\,704\) &          \((3,3,3)\) &       \((24,6,6)\) \\
  \(5\) &   \(-4\,472\,360\) &          \((3,3,3)\) &       \((30,6,6)\) \\
  \(6\) &   \(-4\,818\,916\) &          \((3,3,3)\) &       \((48,3,3)\) \\
  \(7\) &   \(-4\,897\,363\) &          \((3,3,3)\) &       \((33,3,3)\) \\
  \(8\) &   \(-5\,048\,347\) &          \((9,3,3)\) &       \((18,6,3)\) \\
  \(9\) &   \(-5\,067\,967\) &          \((3,3,3)\) &       \((69,3,3)\) \\
 \(10\) &   \(-5\,153\,431\) &         \((27,3,3)\) &      \((216,3,3)\) \\
 \(11\) &   \(-5\,288\,968\) &          \((9,3,3)\) &       \((72,3,3)\) \\
 \(12\) &   \(-5\,769\,988\) &          \((3,3,3)\) &       \((12,6,6)\) \\
 \(13\) &   \(-6\,562\,327\) &          \((9,3,3)\) &      \((126,3,3)\) \\
 \(14\) &   \(-7\,016\,747\) &          \((9,3,3)\) &       \((99,3,3)\) \\
 \(15\) &   \(-7\,060\,148\) &          \((3,3,3)\) &       \((60,6,3)\) \\
 \(16\) &   \(-7\,503\,391\) &          \((9,3,3)\) &       \((90,6,3)\) \\
 \(17\) &   \(-7\,546\,164\) &          \((9,3,3)\) &     \((18,6,6,2)\) \\
 \(18\) &   \(-8\,124\,503\) &          \((9,3,3)\) &      \((261,3,3)\) \\
 \(19\) &   \(-8\,180\,671\) &          \((3,3,3)\) &      \((159,3,3)\) \\
 \(20\) &   \(-8\,721\,735\) &          \((3,3,3)\) &       \((60,6,6)\) \\
 \(21\) &   \(-8\,819\,519\) &          \((3,3,3)\) &      \((276,3,3)\) \\
 \(22\) &   \(-8\,992\,363\) &          \((3,3,3)\) &       \((48,3,3)\) \\
 \(23\) &   \(-9\,379\,703\) &          \((3,3,3)\) &      \((210,3,3)\) \\
 \(24\) &   \(-9\,487\,991\) &          \((3,3,3)\) &      \((381,3,3)\) \\
 \(25\) &   \(-9\,778\,603\) &          \((3,3,3)\) &       \((48,3,3)\) \\
\hline
\end{tabular}
\end{center}
\end{table}



We summarize our results and their obvious conclusion in the following theorem.

\begin{theorem}
\label{thm:14NonIsomGrps}

There exist exactly \(14\) complex quadratic number fields \(K=\mathbb{Q}(\sqrt{d})\)
with \(3\)-class groups \(\mathrm{Cl}_3(K)\) of type \((3,3,3)\)
and discriminants in the range \(-10^7<d<0\).
They have pairwise non-isomorphic

\begin{enumerate}
\item
second and higher \(3\)-class groups
\(\mathrm{Gal}(\mathrm{F}_3^n(K)\vert K)\), \(n\ge 2\),
\item
infinite topological \(3\)-class field tower groups \(\mathrm{Gal}(\mathrm{F}_3^{\infty}(K)\vert K)\).
\end{enumerate}

\end{theorem}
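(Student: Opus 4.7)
The plan is to combine a classical enumeration of complex quadratic fields with small absolute discriminant and \(3\)-rank at least three with a computational separation of the corresponding \(3\)-tower groups via IPADs and capitulation types, and then to deduce the non-isomorphism of the infinite tower groups by a purely group-theoretic quotient argument.

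First, I would take the classical lists of Diaz y Diaz \cite{DD1,DD2} and Buell \cite{Bl} and independently verify with MAGMA \cite{MAGMA} and PARI/GP \cite{PARI} that the range \(-10^7<d<0\) contains exactly \(25\) discriminants of complex quadratic fields \(K=\mathbb{Q}(\sqrt{d})\) with \(r_3(K)\ge 3\), and that among these precisely \(14\) have an elementary abelian \(3\)-class group of type \((3,3,3)\), namely the rows of Table \ref{tbl:DataCollectionRank3} flagged with \(\mathrm{Cl}_3(K)=(3,3,3)\). This fixes the raw list of \(14\) fields to be distinguished.

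Next, for each of these \(14\) fields I would construct the \(13\) unramified cyclic cubic extensions \(L_1,\ldots,L_{13}\vert K\), which are in bijection with the index-\(3\) subgroups of \(\mathrm{Cl}_3(K)\simeq (3,3,3)\), and compute both the IPAD \(\tau^{(1)}(K)=\lbrack 1^3;(\mathrm{Cl}_3(L_i))_{1\le i\le 13}\rbrack\) and the first-layer transfer kernel type \(\varkappa_1(3,K)\). By Artin reciprocity, both invariants depend only on the isomorphism type of the metabelianization \(\mathrm{G}_3^2(K)\), and hence also only on that of the full tower group \(\mathrm{G}_3^{\infty}(K)\).

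The core step is then to tabulate the \(14\) resulting pairs \(\bigl(\tau^{(1)}(K),\varkappa_1(3,K)\bigr)\) and verify directly that they are pairwise distinct. Once this is established, item (1) of the theorem is immediate for \(n=2\), and the case \(n\ge 3\) follows because \(\mathrm{G}_3^2(K)\) is a quotient of every higher \(\mathrm{G}_3^n(K)\); two isomorphic \(\mathrm{G}_3^n(K)\) would force isomorphic \(\mathrm{G}_3^2(K)\). Item (2) follows by the same quotient principle applied to \(\mathrm{G}_3^{\infty}(K)\to\mathrm{G}_3^2(K)\): any isomorphism of infinite tower groups would descend to their metabelianizations, contradicting item (1).

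The hardest part will be the MAGMA computations. Each field demands the structure of the \(3\)-class group in \(13\) sextic number fields, and at the upper end of the discriminant window we are squarely in the "extreme computing" regime of \S \ref{s:ExtremeComputing}, where, as Examples \ref{exm:Malformed3x3} and \ref{exm:Malformed9x3} demonstrate, computer algebra systems frequently produce malformed IPADs or fail to return a capitulation type. Unlike the \((3,3)\) case, no analogue of Theorems \ref{thm:Sporadic3x3} and \ref{thm:Sequences3x3} is available here to sift the output, so each IPAD must be cross-validated by independent MAGMA and PARI runs. The secondary obstacle is the possibility that the plain IPAD does not separate all \(14\) groups; if such a collision occurs, it must be resolved by computing the TKT for the offending pair, or, failing that, by passing to an iterated IPAD of second order in the spirit of \S \ref{s:IPAD2ndOrd}, whose separating power was already demonstrated in Theorems \ref{thm:TowerLengthE6AndE14}, \ref{thm:TowerLengthE8AndE9} and \ref{thm:TowerLengthH4}.
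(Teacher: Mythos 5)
Your proposal is correct and follows essentially the same route as the paper: verify the list of \(14\) fields of type \((3,3,3)\) computationally, use the IPAD together with the \(3\)-principalization type as isomorphism invariants of the metabelian group \(\mathrm{Gal}(\mathrm{F}_3^2(K)\vert K)\), and propagate non-isomorphism to \(n\ge 2\) and to \(\mathrm{Gal}(\mathrm{F}_3^{\infty}(K)\vert K)\) by the quotient argument (which the paper leaves implicit). Even your anticipated fallback is exactly what occurs in the paper: the accumulated IPADs separate all but the three fields \(d\in\lbrace -4\,447\,704,-5\,067\,967,-8\,992\,363\rbrace\), which are then distinguished by the occupation numbers of the capitulation kernels.
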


Before we come to the proof of Theorem
\ref{thm:14NonIsomGrps}
in \S\
\ref{ss:Proof14NonIsomGrps},
we collect basic numerical data concerning fields with \(r_3(K)=3\) in \S\
\ref{ss:DataCollectionRank3},
and we completely determine sophisticated arithmetical invariants in \S\
\ref{ss:PatternRecognitionRank3}
for all fields with \(\mathrm{Cl}_3(K)\) of type \((3,3,3)\).
The first attempt to do so
for the smallest absolute discriminant \(\lvert d\rvert=3\,321\,607\) with \(r_3(K)=3\)
is due to Heider and Schmithals in
\cite[\S\ 4, Tbl.2, p.18]{HeSm},
but it resulted in partial success only.



\subsection{Discriminants \(-10^7<d<0\) of fields \(K=\mathbb{Q}(\sqrt{d})\) with rank \(r_3(K)=3\)}
\label{ss:DataCollectionRank3}

Since one of our aims is
to investigate tendencies for the coclass of second and higher \(p\)-class groups
\(\mathrm{G}_p^n(K)=\mathrm{Gal}(\mathrm{F}_p^n(K)\vert K)\), \(n\ge 2\),
\cite{Ma1,Ma4}
of a series of algebraic number fields \(K\) with infinite \(p\)-class field tower,
for an odd prime \(p\ge 3\),
the most obvious choice which suggests itself is to take the smallest possible prime \(p=3\)
and to select complex quadratic fields \(K=\mathbb{Q}(\sqrt{d})\), \(d<0\),
having the simplest possible \(3\)-class group \(\mathrm{Cl}_3(K)\) of rank \(3\),
that is, of elementary abelian type \((3,3,3)\).

The reason is that Koch and Venkov
\cite{KoVe}
have improved the lower bound of Golod, Shafarevich
\cite{Sh,GoSh}
and Vinberg
\cite{Vb}
for the \(p\)-class rank \(r_p(K)\),
which ensures an infinite \(p\)-class tower of a complex quadratic field \(K\),
from \(4\) to \(3\).

However, quadratic fields with \(3\)-rank \(r_3(K)=3\) are sparse.
Diaz y Diaz and Buell
\cite{DD1,Sk,Bl,DD2}
have determined the minimal absolute discriminant of such fields to be
\(\lvert d\rvert=3\,321\,607\).

To provide an independent verification, we used the computational algebra system Magma
\cite{BCP,BCFS,MAGMA}
for compiling a list of all quadratic fundamental discriminants \(-10^7<d<0\)
of fields \(K=\mathbb{Q}(\sqrt{d})\) with \(3\)-class rank \(r_3(K)=3\).
In \(16\) hours of CPU time we obtained the \(25\) desired discriminants
and the abelian type invariants (here written in \(3\)-power form)
of the corresponding \(3\)-class groups \(\mathrm{Cl}_3(K)\),
and also of the complete class groups \(\mathrm{Cl}(K)\),
as given in Table
\ref{tbl:DataCollectionRank3}.
There appeared only one discriminant \(d=-7\,503\,391\) (No. \(16\)) which is not contained in
\cite[Appendix 1, p.68]{DD2}
already.

There are \(14\) discriminants, starting with \(d=-4\,447\,704\),
such that \(\mathrm{Cl}_3(K)\) is elementary abelian of type \((3,3,3)\),
and \(10\) discriminants, starting with \(-3\,321\,607\),
such that \(\mathrm{Cl}_3(K)\) is of non-elementary type \((9,3,3)\).
For the single discriminant \(d=-5\,153\,431\), we have a \(3\)-class group of type \((27,3,3)\).
We have published this information in the Online Encyclopedia of Integer Sequences (OEIS)
\cite{OEIS},
sequences A244574 and A244575.



\renewcommand{\arraystretch}{1.1}

\begin{table}[ht]
\caption{Pattern recognition via ordered IPADs}
\label{tbl:PatternRecognitionRank3}
\begin{center}
\begin{tabular}{|r|r|r|r|r|r|r|r|r|r|r|r|r|r|}
\hline
              No. & \multicolumn{13}{|c|}{discriminant} \\
\hline
            \(i\) & \(1\) & \(2\) & \(3\) & \(4\) & \(5\) & \(6\) & \(7\) & \(8\) & \(9\) & \(10\) & \(11\) & \(12\) & \(13\) \\
\hline
            \(1\) & \multicolumn{13}{|c|}{\(d=-4\,447\,704\)} \\
\hline
    \(\varkappa\) & \(8\) & \(1\) & \(8\) & \(8\) & \(10\) & \(8\) & \(6\) & \(13\) & \(8\) & \(2\) & \(10\) & \(8\) & \(9\) \\
 \(o(\varkappa)\) & \(1\) & \(1\) & \(0\) & \(0\) & \(0\) & \(1\) & \(0\) & \(6\) & \(1\) & \(2\) & \(0\) & \(0\) & \(1\) \\
         \(\tau\) & \(2^21^2\) & \(21^4\) & \(2^21^2\) & \(32^21\) & \(2^21^2\) & \(21^4\) & \(2^21^2\) & \(2^21^2\) & \(21^4\) & \(21^4\) & \(21^4\) & \(2^21^2\) & \(2^21^2\) \\
       \(\tau^0\) & \(1^2\) & \(1^2\) & \(1^2\) & \(21\) & \(1^2\) & \(1^2\) & \(1^2\) & \(1^2\) & \(1^2\) & \(1^2\) & \(1^2\) & \(1^2\) & \(1^2\) \\
\hline
            \(2\) & \multicolumn{13}{|c|}{\(d=-4\,472\,360\)} \\
\hline
    \(\varkappa\) & \(1\) & \(12\) & \(6\) & \(13\) & \(6\) & \(10\) & \(4\) & \(13\) & \(10\) & \(10\) & \(1\) & \(8\) & \(4\) \\
 \(o(\varkappa)\) & \(2\) & \(0\) & \(0\) & \(2\) & \(0\) & \(2\) & \(0\) & \(1\) & \(0\) & \(3\) & \(0\) & \(1\) & \(2\) \\
         \(\tau\) & \(2^21^2\) & \(2^21^2\) & \(2^21^2\) & \(21^4\) & \(2^21^2\) & \(21^4\) & \(21^4\) & \(2^21^2\) & \(2^21^2\) & \(2^21^2\) & \(32^21\) & \(2^21^2\) & \(21^4\) \\
       \(\tau^0\) & \(1^2\) & \(1^2\) & \(1^2\) & \(1^2\) & \(1^2\) & \(1^2\) & \(1^2\) & \(1^2\) & \(1^2\) & \(1^2\) & \(21\) & \(1^2\) & \(1^2\) \\
\hline
            \(3\) & \multicolumn{13}{|c|}{\(d=-4\,818\,916\)} \\
\hline
    \(\varkappa\) & \(6\) & \(9\) & \(13\) & \(1\) & \(5\) & \(6\) & \(9\) & \(4\) & \(11\) & \(7\) & \(1\) & \(3\) & \(4\) \\
 \(o(\varkappa)\) & \(2\) & \(0\) & \(1\) & \(2\) & \(1\) & \(2\) & \(1\) & \(0\) & \(2\) & \(0\) & \(1\) & \(0\) & \(1\) \\
         \(\tau\) & \(2^21^2\) & \(2^21^2\) & \(2^21^2\) & \(21^4\) & \(21^4\) & \(2^21^2\) & \(2^21^2\) & \(2^21^2\) & \(431^3\) & \(2^21^2\) & \(21^4\) & \(32^21\) & \(2^21^2\) \\
       \(\tau^0\) & \(1^2\) & \(1^2\) & \(1^2\) & \(1^2\) & \(1^2\) & \(1^2\) & \(1^2\) & \(1^2\) & \(31\) & \(1^2\) & \(1^2\) & \(21\) & \(1^2\) \\
\hline
            \(4\) & \multicolumn{13}{|c|}{\(d=-4\,897\,363\)} \\
\hline
    \(\varkappa\) & \(3\) & \(8\) & \(11\) & \(2\) & \(6\) & \(6\) & \(12\) & \(7\) & \(2\) & \(2\) & \(9\) & \(13\) & \(6\) \\
 \(o(\varkappa)\) & \(0\) & \(3\) & \(1\) & \(0\) & \(0\) & \(3\) & \(1\) & \(1\) & \(1\) & \(0\) & \(1\) & \(1\) & \(1\) \\
         \(\tau\) & \(2^21^2\) & \(321^3\) & \(431^3\) & \(2^21^2\) & \(21^4\) & \(2^21^2\) & \(32^21\) & \(21^4\) & \(2^21^2\) & \(2^21^2\) & \(2^21^2\) & \(2^21^2\) & \(2^21^2\) \\
       \(\tau^0\) & \(1^2\) & \(21\) & \(31\) & \(1^2\) & \(1^2\) & \(1^2\) & \(21\) & \(1^2\) & \(1^2\) & \(1^2\) & \(1^2\) & \(1^2\) & \(1^2\) \\
\hline
            \(5\) & \multicolumn{13}{|c|}{\(d=-5\,067\,967\)} \\
\hline
    \(\varkappa\) & \(8\) & \(6\) & \(9\) & \(2\) & \(3\) & \(7\) & \(12\) & \(7\) & \(1\) & \(4\) & \(3\) & \(9\) & \(4\) \\
 \(o(\varkappa)\) & \(1\) & \(1\) & \(2\) & \(2\) & \(0\) & \(1\) & \(2\) & \(1\) & \(2\) & \(0\) & \(0\) & \(1\) & \(0\) \\
         \(\tau\) & \(21^4\) & \(21^4\) & \(2^21^2\) & \(21^4\) & \(2^21^2\) & \(2^21^2\) & \(2^21^2\) & \(32^21\) & \(2^21^2\) & \(21^4\) & \(21^4\) & \(2^21^2\) & \(2^21^2\) \\
       \(\tau^0\) & \(1^2\) & \(1^2\) & \(1^2\) & \(1^2\) & \(1^2\) & \(1^2\) & \(1^2\) & \(21\) & \(1^2\) & \(1^2\) & \(1^2\) & \(1^2\) & \(1^2\) \\
\hline
            \(6\) & \multicolumn{13}{|c|}{\(d=-5\,769\,988\)} \\
\hline
    \(\varkappa\) & \(12\) & \(11\) & \(7\) & \(6\) & \(1\) & \(1\) & \(10\) & \(10\) & \(9\) & \(6\) & \(4\) & \(3\) & \(13\) \\
 \(o(\varkappa)\) & \(2\) & \(0\) & \(1\) & \(1\) & \(0\) & \(2\) & \(1\) & \(0\) & \(1\) & \(2\) & \(1\) & \(1\) & \(1\) \\
         \(\tau\) & \(321^3\) & \(2^21^2\) & \(21^4\) & \(321^3\) & \(2^21^2\) & \(21^4\) & \(21^4\) & \(2^21^2\) & \(2^21^2\) & \(2^21^2\) & \(2^21^2\) & \(21^4\) & \(32^21\) \\
       \(\tau^0\) & \(21\) & \(1^2\) & \(1^2\) & \(21\) & \(1^2\) & \(1^2\) & \(1^2\) & \(1^2\) & \(1^2\) & \(1^2\) & \(1^2\) & \(1^2\) & \(21\) \\
\hline
\end{tabular}
\end{center}
\end{table}



\renewcommand{\arraystretch}{1.1}

\begin{table}[ht]
\caption{Pattern recognition (continued)}
\label{tbl:PatternRecognitionRank3Ctd}
\begin{center}
\begin{tabular}{|r|r|r|r|r|r|r|r|r|r|r|r|r|r|}
\hline
              No. & \multicolumn{13}{|c|}{discriminant} \\
\hline
            \(i\) & \(1\) & \(2\) & \(3\) & \(4\) & \(5\) & \(6\) & \(7\) & \(8\) & \(9\) & \(10\) & \(11\) & \(12\) & \(13\) \\
\hline
            \(7\) & \multicolumn{13}{|c|}{\(d=-7\,060\,148\)} \\
\hline
    \(\varkappa\) & \(2\) & \(4\) & \(4\) & \(9\) & \(4\) & \(10\) & \(8\) & \(10\) & \(10\) & \(1\) & \(6\) & \(8\) & \(3\) \\
 \(o(\varkappa)\) & \(1\) & \(1\) & \(1\) & \(3\) & \(0\) & \(1\) & \(0\) & \(2\) & \(1\) & \(3\) & \(0\) & \(0\) & \(0\) \\
         \(\tau\) & \(21^4\) & \(21^4\) & \(32^21\) & \(2^21^2\) & \(32^21\) & \(21^4\) & \(2^21^2\) & \(321^3\) & \(21^4\) & \(21^4\) & \(2^21^2\) & \(2^21^2\) & \(321^3\) \\
       \(\tau^0\) & \(1^2\) & \(1^2\) & \(21\) & \(1^2\) & \(21\) & \(1^2\) & \(1^2\) & \(21\) & \(1^2\) & \(1^2\) & \(1^2\) & \(1^2\) & \(21\) \\
\hline
            \(8\) & \multicolumn{13}{|c|}{\(d=-8\,180\,671\)} \\
\hline
    \(\varkappa\) & \(12\) & \(9\) & \(2\) & \(6\) & \(10\) & \(6\) & \(8\) & \(2\) & \(10\) & \(10\) & \(9\) & \(11\) & \(4\) \\
 \(o(\varkappa)\) & \(0\) & \(2\) & \(0\) & \(1\) & \(0\) & \(2\) & \(0\) & \(1\) & \(2\) & \(3\) & \(1\) & \(1\) & \(0\) \\
         \(\tau\) & \(321^3\) & \(2^21^2\) & \(21^4\) & \(2^21^2\) & \(2^21^2\) & \(2^21^2\) & \(2^21^2\) & \(21^4\) & \(2^21^2\) & \(2^21^2\) & \(2^21^2\) & \(21^4\) & \(2^21^2\) \\
       \(\tau^0\) & \(21\) & \(1^2\) & \(1^2\) & \(1^2\) & \(1^2\) & \(1^2\) & \(1^2\) & \(1^2\) & \(1^2\) & \(1^2\) & \(1^2\) & \(1^2\) & \(1^2\) \\
\hline
            \(9\) & \multicolumn{13}{|c|}{\(d=-8\,721\,735\)} \\
\hline
    \(\varkappa\) & \(5\) & \(2\) & \(5\) & \(1\) & \(10\) & \(13\) & \(4\) & \(7\) & \(11\) & \(3\) & \(9\) & \(8\) & \(8\) \\
 \(o(\varkappa)\) & \(1\) & \(1\) & \(1\) & \(1\) & \(2\) & \(0\) & \(1\) & \(2\) & \(1\) & \(1\) & \(1\) & \(0\) & \(1\) \\
         \(\tau\) & \(2^21^2\) & \(21^4\) & \(21^4\) & \(21^4\) & \(321^3\) & \(2^21^2\) & \(21^4\) & \(2^21^2\) & \(32^21\) & \(32^21\) & \(21^4\) & \(32^21\) & \(2^21^2\) \\
       \(\tau^0\) & \(1^2\) & \(1^2\) & \(1^2\) & \(1^2\) & \(21\) & \(1^2\) & \(1^2\) & \(1^2\) & \(21\) & \(21\) & \(1^2\) & \(21\) & \(1^2\) \\
\hline
           \(10\) & \multicolumn{13}{|c|}{\(d=-8\,819\,519\)} \\
\hline
    \(\varkappa\) & \(2\) & \(7\) & \(8\) & \(12\) & \(4\) & \(12\) & \(9\) & \(5\) & \(5\) & \(3\) & \(10\) & \(6\) & \(10\) \\
 \(o(\varkappa)\) & \(0\) & \(1\) & \(1\) & \(1\) & \(2\) & \(1\) & \(1\) & \(1\) & \(1\) & \(2\) & \(0\) & \(2\) & \(0\) \\
         \(\tau\) & \(2^21^2\) & \(21^4\) & \(32^21\) & \(2^21^2\) & \(2^21^2\) & \(2^21^2\) & \(2^21^2\) & \(2^21^2\) & \(21^4\) & \(2^21^2\) & \(2^21^2\) & \(2^21^2\) & \(1^6\) \\
       \(\tau^0\) & \(1^2\) & \(1^2\) & \(21\) & \(1^2\) & \(1^2\) & \(1^2\) & \(1^2\) & \(1^2\) & \(1^2\) & \(1^2\) & \(1^2\) & \(1^2\) & \(1^2\) \\
\hline
           \(11\) & \multicolumn{13}{|c|}{\(d=-8\,992\,363\)} \\
\hline
    \(\varkappa\) & \(12\) & \(10\) & \(2\) & \(12\) & \(9\) & \(5\) & \(10\) & \(10\) & \(2\) & \(12\) & \(6\) & \(9\) & \(7\) \\
 \(o(\varkappa)\) & \(0\) & \(2\) & \(0\) & \(0\) & \(1\) & \(1\) & \(1\) & \(0\) & \(2\) & \(3\) & \(0\) & \(3\) & \(0\) \\
         \(\tau\) & \(2^21^2\) & \(21^4\) & \(2^21^2\) & \(2^21^2\) & \(32^21\) & \(2^21^2\) & \(21^4\) & \(21^4\) & \(21^4\) & \(2^21^2\) & \(2^21^2\) & \(21^4\) & \(2^21^2\) \\
       \(\tau^0\) & \(1^2\) & \(1^2\) & \(1^2\) & \(1^2\) & \(21\) & \(1^2\) & \(1^2\) & \(1^2\) & \(1^2\) & \(1^2\) & \(1^2\) & \(1^2\) & \(1^2\) \\
\hline
           \(12\) & \multicolumn{13}{|c|}{\(d=-9\,379\,703\)} \\
\hline
    \(\varkappa\) & \(8\) & \(11\) & \(8\) & \(13\) & \(9\) & \(5\) & \(6\) & \(1\) & \(2\) & \(13\) & \(4\) & \(12\) & \(3\) \\
 \(o(\varkappa)\) & \(1\) & \(1\) & \(1\) & \(1\) & \(1\) & \(1\) & \(0\) & \(2\) & \(1\) & \(0\) & \(1\) & \(1\) & \(2\) \\
         \(\tau\) & \(21^4\) & \(2^21^2\) & \(321^3\) & \(2^21^2\) & \(21^4\) & \(21^4\) & \(2^21^2\) & \(21^4\) & \(21^4\) & \(2^21^2\) & \(2^21^2\) & \(2^21^2\) & \(2^21^2\) \\
       \(\tau^0\) & \(1^2\) & \(1^2\) & \(21\) & \(1^2\) & \(1^2\) & \(1^2\) & \(1^2\) & \(1^2\) & \(1^2\) & \(1^2\) & \(1^2\) & \(1^2\) & \(1^2\) \\
\hline
           \(13\) & \multicolumn{13}{|c|}{\(d=-9\,487\,991\)} \\
\hline
    \(\varkappa\) & \(4\) & \(2\) & \(2\) & \(11\) & \(13\) & \(9\) & \(12\) & \(9\) & \(8\) & \(1\) & \(1\) & \(12\) & \(3\) \\
 \(o(\varkappa)\) & \(2\) & \(2\) & \(1\) & \(1\) & \(0\) & \(0\) & \(0\) & \(1\) & \(2\) & \(0\) & \(1\) & \(2\) & \(1\) \\
         \(\tau\) & \(2^21^2\) & \(2^21^2\) & \(2^21^2\) & \(2^21^2\) & \(321^3\) & \(2^21^2\) & \(2^21^2\) & \(21^4\) & \(2^21^2\) & \(21^4\) & \(2^21^2\) & \(2^21^2\) & \(2^21^2\) \\
       \(\tau^0\) & \(1^2\) & \(1^2\) & \(1^2\) & \(1^2\) & \(21\) & \(1^2\) & \(1^2\) & \(1^2\) & \(1^2\) & \(1^2\) & \(1^2\) & \(1^2\) & \(1^2\) \\
\hline
           \(14\) & \multicolumn{13}{|c|}{\(d=-9\,778\,603\)} \\
\hline
    \(\varkappa\) & \(10\) & \(6\) & \(6\) & \(9\) & \(9\) & \(10\) & \(8\) & \(10\) & \(13\) & \(5\) & \(12\) & \(6\) & \(10\) \\
 \(o(\varkappa)\) & \(0\) & \(0\) & \(0\) & \(0\) & \(1\) & \(3\) & \(0\) & \(1\) & \(2\) & \(4\) & \(0\) & \(1\) & \(1\) \\
         \(\tau\) & \(2^21^2\) & \(321^3\) & \(21^4\) & \(32^21\) & \(32^21\) & \(2^21^2\) & \(2^21^2\) & \(2^21^2\) & \(21^4\) & \(21^4\) & \(2^21^2\) & \(2^21^2\) & \(2^21^2\) \\
       \(\tau^0\) & \(1^2\) & \(21\) & \(1^2\) & \(21\) & \(21\) & \(1^2\) & \(1^2\) & \(1^2\) & \(1^2\) & \(1^2\) & \(1^2\) & \(1^2\) & \(1^2\) \\
\hline
\end{tabular}
\end{center}
\end{table}



\subsection{Arithmetic invariants of fields \(K=\mathbb{Q}(\sqrt{d})\) with \(\mathrm{Cl}_3(K)\simeq (3,3,3)\)}
\label{ss:PatternRecognitionRank3}

After the preliminary data collection in section \S\
\ref{ss:DataCollectionRank3},
we restrict ourselves to the \(14\) cases with elementary abelian \(3\)-class group of type \((3,3,3)\).
The complex quadratic field \(K=\mathbb{Q}(\sqrt{d})\) possesses \(13\) unramified cyclic cubic extensions \(L_1,\ldots,L_{13}\)
with dihedral absolute Galois group \(\mathrm{Gal}(L_i\vert\mathbb{Q})\) of order six
\cite{Ma1}.
Based on Fieker's technique
\cite{Fi},
we use the computational algebra system Magma
\cite{BCFS,MAGMA}
to construct these extensions and to calculate their arithmetical invariants.
In Table
\ref{tbl:PatternRecognitionRank3},
which is continued in Table
\ref{tbl:PatternRecognitionRank3Ctd}
on the following page, we present
the kernel \(\varkappa_i\) of the \(3\)-principalization of \(K\) in \(L_i\)
\cite{Ma,Ma1},
the occupation numbers \(o(\varkappa)_i\) of the principalization kernels
\cite{Ma2}, and
the abelian type invariants \(\tau_i\), resp. \(\tau^0_i\),
of the \(3\)-class group \(\mathrm{Cl}_3(L_i)\), resp. \(\mathrm{Cl}_3(K_i)\),
for each \(1\le i\le 13\)
\cite{Ma3,Ma4}.
Here, we denote by \(K_i\) the unique real non-Galois absolutely cubic subfield of \(L_i\).
For brevity, we give \(3\)-logarithms of abelian type invariants and we denote iteration by formal exponents.
Note that the multiplets \(\varkappa\) and \(\tau\) are \textit{ordered} and in componentwise mutual correspondence,
in the sense of \S\
\ref{s:Correspondence}.



\renewcommand{\arraystretch}{1.1}

\begin{table}[ht]
\caption{Accumulative (unordered) form of IPADs}
\label{tbl:SuccinctData}
\begin{center}
\begin{tabular}{|r|r||c|c|c||c|c||c||l|l|}
\hline
   No.  & discriminant \(d\) & \(2^21^2\) & \(21^4\) & \(1^6\) & \(32^21\) & \(321^3\) & \(431^3\) & polarization & state   \\
\hline
  \(1\) &   \(-4\,447\,704\) &      \(7\) &    \(5\) &   \(0\) &     \(1\) &     \(0\) &     \(0\) & uni          & ground  \\
  \(2\) &   \(-4\,472\,360\) &      \(8\) &    \(4\) &   \(0\) &     \(1\) &     \(0\) &     \(0\) & uni          & ground  \\
  \(3\) &   \(-4\,818\,916\) &      \(8\) &    \(3\) &   \(0\) &     \(1\) &     \(0\) &     \(1\) & bi           & excited \\
  \(4\) &   \(-4\,897\,363\) &      \(8\) &    \(2\) &   \(0\) &     \(1\) &     \(1\) &     \(1\) & tri          & excited \\
  \(5\) &   \(-5\,067\,967\) &      \(7\) &    \(5\) &   \(0\) &     \(1\) &     \(0\) &     \(0\) & uni          & ground  \\
  \(6\) &   \(-5\,769\,988\) &      \(6\) &    \(4\) &   \(0\) &     \(1\) &     \(2\) &     \(0\) & tri          & ground  \\
  \(7\) &   \(-7\,060\,148\) &      \(4\) &    \(5\) &   \(0\) &     \(2\) &     \(2\) &     \(0\) & tetra        & ground  \\
  \(8\) &   \(-8\,180\,671\) &      \(9\) &    \(3\) &   \(0\) &     \(0\) &     \(1\) &     \(0\) & uni          & ground  \\
  \(9\) &   \(-8\,721\,735\) &      \(4\) &    \(5\) &   \(0\) &     \(3\) &     \(1\) &     \(0\) & tetra        & ground  \\
 \(10\) &   \(-8\,819\,519\) &      \(9\) &    \(2\) &   \(1\) &     \(1\) &     \(0\) &     \(0\) & uni          & ground  \\
 \(11\) &   \(-8\,992\,363\) &      \(7\) &    \(5\) &   \(0\) &     \(1\) &     \(0\) &     \(0\) & uni          & ground  \\
 \(12\) &   \(-9\,379\,703\) &      \(7\) &    \(5\) &   \(0\) &     \(0\) &     \(1\) &     \(0\) & uni          & ground  \\
 \(13\) &   \(-9\,487\,991\) &     \(10\) &    \(2\) &   \(0\) &     \(0\) &     \(1\) &     \(0\) & uni          & ground  \\
 \(14\) &   \(-9\,778\,603\) &      \(7\) &    \(3\) &   \(0\) &     \(2\) &     \(1\) &     \(0\) & tri          & ground  \\
\hline
\end{tabular}
\end{center}
\end{table}



In Table
\ref{tbl:SuccinctData},
we classify each of the \(14\) complex quadratic fields \(K=\mathbb{Q}(\sqrt{d})\) of type \((3,3,3)\)
according to the occupation numbers of the abelian type invariants of the \(3\)-class groups \(\mathrm{Cl}_3(L_i)\)
of the \(13\) unramified cyclic cubic extensions \(L_i\),
that is the \textit{accumulated} (unordered) form of the IPAD of \(K\).
Whereas the dominant part of these groups is of order \(3^6=729\),
there always exist(s) at least one and at most four distinguished groups of bigger order,
usually \(3^8=6\,561\) and occasionally even \(3^{10}=59\,049\),
According to the number of distinguished groups,
we speak about \textit{uni-}, \textit{bi-}, \textit{tri-} or \textit{tetra-}polarization.
If the maximal value of the order is \(3^8\), then we have a \textit{ground} state,
otherwise an \textit{excited} state.



\subsection{Proof of Theorem \ref{thm:14NonIsomGrps}}
\label{ss:Proof14NonIsomGrps}

\begin{proof}
According to
\cite[Thm.1.1 and Dfn.1.1, pp.402--403]{Ma4},
the information given in Table
\ref{tbl:SuccinctData}
consists of isomorphism invariants of the metabelian Galois group
\(G=\mathrm{Gal}(\mathrm{F}_3^2(K)\vert K)\)
of the second Hilbert \(3\)-class field of \(K\)
\cite{Ma1}.
Consequently, with respect to the \(13\) abelian type invariants of the \(3\)-class groups \(\mathrm{Cl}_3(L_i)\) alone,
only the groups \(G\) for \(d\in\lbrace -4\,447\,704,-5\,067\,967,-8\,992\,363\rbrace\) could be isomorphic.
However, Tables
\ref{tbl:PatternRecognitionRank3}
and
\ref{tbl:PatternRecognitionRank3Ctd}
show that these three groups differ with respect to another isomorphism invariant,
the \(3\)-principalization type \(\varkappa\)
\cite{Ma,Ma2},
since the corresponding maximal occupation numbers of the multiplet \(o(\varkappa)\) are \(6,2,3\), respectively.
\end{proof}



\subsection{Final remark}
\label{ss:FinalRemark}

We would like to emphasize that Theorem
\ref{thm:14NonIsomGrps}
provides evidence for \textit{a wealth of structure} in the set of infinite \(3\)-class field towers,
which was unknown up to now,
since the common practice is to consider a \(3\)-class field tower as \lq\lq done\rq\rq\
when some criterion in the style of Golod-Shafarevich-Vinberg
\cite{Sh,GoSh,Vb}
or Koch-Venkov
\cite{KoVe}
ensures just its infinity.
However, this perspective is very coarse
and our result proves that it can be refined considerably.

It would be interesting to extend the range of discriminants \(-10^7<d<0\)
and to find the first examples of isomorphic infinite \(3\)-class field towers.

Another very difficult remaining open problem is the actual identification
of the metabelianizations of the \(3\)-tower groups \(G\) of the \(14\) fields.
The complexity of this task is due to unmanageable descendant numbers
of certain vertices, e.g. \(\langle 243,37\rangle\) and \(\langle 729,122\rangle\),
in the tree with root \(\langle 27,5\rangle\).




\end{document}